\newtheorem{thm}{Theorem}
\newcommand\cC{\mathcal{C}}
\newcommand\bbR{\mathbb R}
\def\marklegendre#1{
    \foreach \i / \ln in {
       0/0.02544604,  1/0.12923441, 2/0.29707742, 3/0.5,
       4/0.70292258, 5/0.87076559, 6/0.97455396}
    { coordinate [pos=\ln] (#1\i) }
  }
\long\def\algcmt#1{\emph{\{ #1 \}%
}}
\def\intd{\, d}
\let\cite=\citep
\def\dlmf#1{\href{http://dlmf.nist.gov/#1}{#1}}
\begin{document}


\begin{frontmatter}
\title{Quadrature by Expansion: A New Method for the Evaluation of Layer~Potentials}

\author[cims]{Andreas Klöckner}
\ead{kloeckner@cims.nyu.edu}
\author[dartmouth]{Alexander Barnett}
\author[cims]{Leslie Greengard}
\author[cims]{Michael O'Neil}
\address[cims]{Courant Institute of Mathematical Sciences, 251 Mercer
Street, New York, NY 10012}
\address[dartmouth]{Department of Mathematics,
  Dartmouth College, Hanover, NH, 03755}

\begin{abstract}
  Integral equation methods for the solution of partial differential equations,
  when coupled with suitable fast algorithms, yield geometrically flexible,
  asymptotically optimal and well-conditioned schemes in either interior or exterior
  domains. The practical application of these methods, however, requires the
  accurate evaluation of boundary integrals with singular, weakly singular or nearly
  singular kernels.  Historically, these issues
  have been handled either by low-order product integration rules
  (computed semi-analytically), by singularity subtraction/cancellation,
  by kernel regularization and asymptotic analysis, or by the construction of
  special purpose ``generalized Gaussian quadrature'' rules.
  In this paper, we present a systematic, high-order
  approach that works for any singularity (including hypersingular
  kernels), based only on the assumption that the field induced by the
  integral operator is locally smooth when restricted to either the
  interior or the exterior.  Discontinuities in the field across the
  boundary are permitted. The scheme, denoted QBX (quadrature by
  expansion), is easy to implement and compatible with fast
  hierarchical algorithms such as the fast multipole method.
  We include accuracy tests for a variety of integral operators in two
  dimensions on smooth and corner domains.
\end{abstract}

\begin{keyword}
Layer Potentials\sep Singular Integrals\sep Quadrature\sep High-order accuracy\sep
Integral equations
\end{keyword}

\end{frontmatter}


\section{Introduction}
\label{sec:intro}
One of the difficulties encountered in the practical application of
integral equation methods lies in the need to evaluate integrals
with singular or weakly singular kernels in complicated domains. For the sake of
concreteness, we assume the computational task is to compute
layer potentials such as the single and double layer potentials
\begin{align}
  S\sigma(x) := \int_\Gamma G(x,x')\sigma(x') \intd x',
  \label{eq:kernelint}   \\
  D\mu(x) := \int_\Gamma \frac{\partial G}{\partial \hat n_{x'}}(x,x')\mu(x') \intd x'
  \label{eq:kernelintd}
\end{align}
for target points $x$ on a closed, smooth contour
$\Gamma\subset \mathbb R^2$, where $G$ is the Green's function for an
underlying elliptic PDE and $\hat n_{x'}$ denotes the outward unit
normal at $x'$.  In the case of the double layer potential,
it is typically the principal value of $D\mu$ that is desired for $x \in \Gamma$.

In the present paper, we will restrict our attention to the Helmholtz equation
\[ \Delta \phi + k^2 \phi = 0, \]
for which
\begin{equation}
  G(x,x') = \frac i4 H^{(1)}_0(k|x-x'|),
  \label{eq:helmholtz-ker}
\end{equation}
where $H^{(1)}_0$ denotes the Hankel function of the first kind of order 0. $H^{(1)}_0$
satisfies the Sommerfeld radiation condition
\[
  \lim_{r\to\infty}
  r^{1/2}
  \left(\frac{\partial}{\partial r} - ik \right) H^{(1)}_0
  = 0,
\]
where $r=|x-x'|$ and $k\in\mathbb C$ with $\operatorname{Im} k \ge 0$.
Using this Green's function,
it is well-known that the operators in \eqref{eq:kernelint}, \eqref{eq:kernelintd}
are both weakly singular
when acting on the boundary, and the quantities of interest
are well-defined improper integrals. Off the boundary,
the double layer potential must be treated with more care as the singularity
is of the order $1/|x-x'|$ and the limiting value has a jump of
$\mu(x')$ at the point $x' \in \Gamma$. Operators involving stronger
singularities, such as the gradient of $S\sigma$ or $D\mu$,
are often also of practical interest.

When the target $x$ is far from the boundary, the integrands in
\eqref{eq:kernelint}, \eqref{eq:kernelintd} are smooth, and high-order
quadratures can be obtained by standard methods. Difficulties are encountered
only when $x$ is either on or near the boundary.
The problem of quadrature for singular or nearly singular integrals,
of course, has a rich history and, we do not seek to review the
literature here (see, for example, the texts
\citep{atkinson_1997,brebbia_1984,kress_1999}).  The most common approach is
probably product integration, that is to say exact integration of the kernel
multiplied by a piecewise polynomial approximation of the density $\sigma$ or
$\mu$ on a piecewise smooth approximation of the boundary. Purely analytic
rules, however, tend to be limited to a few singularities (such as $\log
|x-x'|$ in 2D or $1/|x-x'|$ in 3D) and low order approximations of the density
and boundary.  To handle the kernels $H^{(1)}_0(k|x-x'|)$ or $e^{ik|x-x'|}/|x-x'|$,
analytic rules are often combined with numerical quadratures through the method
of {\em singularity subtraction}. More precisely, it is easy to verify that
\[ \frac i4 H^{(1)}_0(k|x-x'|) - \frac{1}{2\pi} \log|x-x'|  \quad\text{and}\quad
 \frac{e^{ik|x-x'|}}{|x-x'|} - \frac{1}{|x-x'|}
\]
are smoother functions than the original kernels themselves and somewhat
easier to integrate numerically.
More generally, if the kernel $G_1(x,x')$ can be integrated, say, on a flat surface by
analytic means, then integrating $G_2(x,x')-G_1(x,x')$ is an easier task
if $G_2$ and $G_1$ have the same leading order singularity
\citep{davis_1984,farina_2001,johnson_1989}.

Three other powerful approaches are (a) to design special purpose quadratures
that integrate a specific class of singular functions with high-order accuracy
\citep{alpert_hybrid_1999,bremer_nonlinear_2010,kapur_high-order_1997,
kolm_numerical_2001,kress_numerical_1995,
sidi_1988,strain_1995,yarvin_generalized_1998,helsing_integral_2009},
(b) to find a change of variables that removes the principal singularity
\citep{bruno_2001, davis_1984, duffy_1982, graglia_2008, hackbusch_sauter_1994, jarvenpaa_2003,
khayat_2005, kress_boundary_1991, schwab_1992, ying_2006}, and
(c) to regularize the kernel so that smooth rules can be applied,
followed by corrections through asymptotic analysis or Richardson extrapolation
\citep{beale_lai_2001, goodman_1990, haroldson_1998, lowengrub_1993, schwab_1992}.
By contrast with singularity subtraction, methods of type (b) are sometimes
referred to as using {\em singularity cancellation}.

In the complex analytic (or harmonic) case, some
remarkable methods have been developed by \citet{helsing_2008a}
for off-surface evaluation. It should be noted that in the two-dimensional case,
several of these alternatives provide extremely effective schemes, especially
\cite{bremer_nonlinear_2010,helsing_integral_2009,helsing_tutorial_2012,helsing_2008a} since they all permit local adaptivity and high order accuracy.

Many applications involve layer potentials which are only defined as principal
value and finite-part (hypersingular) integrals. Especially the latter
are notoriously difficult using classical quadrature
schemes, although methods using integration by parts can be effective
\cite{chapko_numerical_2000}.

The main purpose of the present paper is to introduce a rather
different approach to the evaluation of layer potentials, based on the
fact that the fields $S\sigma$ or $D\mu$ in
\eqref{eq:kernelint}, \eqref{eq:kernelintd} are locally smooth
functions when restricted to either the interior or the exterior,
although they may be discontinuous across the boundary. The scheme,
denoted QBX (quadrature by expansion), is easy to implement, high
order accurate, and requires only a smooth underlying quadrature
scheme. This underlying smooth rule may be global or composite/panel-based
and adaptive. The method is also compatible with fast hierarchical algorithms. QBX
is an extension of the work of
\citet{barnett_evaluation_2012}, who addresses the near but off-surface
evaluation problem.
Unlike the schemes discussed above, it is essentially
dimension-independent, although our numerical experiments
here are limited to the two-dimensional case.

Our approach is somewhat related to the algorithms
discussed in
 \cite{delves_numerical_1967,lyness_numerical_1967}. In those papers, use is
also made of the fact that the function induced by a boundary integral
is smooth in the interior and can be expanded as Taylor series
about an interior point. The viewpoint taken, however, is global and
restricted to the case of Cauchy integrals with analytic data.
In essence, they make use of a
a single expansion center with a radius of convergence
determined by the nearest singularity of the underlying
analytic function itself.
By contrast, we make no
assumptions about the location of the nearest singularity. Instead, we use
expansions centered at
points very close to the boundary $\Gamma,$ and make use of
error estimates that
depend only on the local smoothness of the data and boundary, which may only be
of finite differentiability.

The paper is organized as follows: In Section \ref{sec:idea}, we show
how QBX follows naturally from considerations of potential theory.
Section \ref{sec:math-details} describes the mathematical foundations
of the method, and Section \ref{sec:numerical} demonstrates its
numerical performance. A simple but complete description of the
algorithm can be found in Section \ref{sec:at-a-glance}. Finally, we
discuss additional details and potential extensions of the present
work in Section \ref{sec:details}.
\section{Smooth, high-order quadrature}
\label{sec:idea}
\begin{figure}
  \hfill
  \subfigure[Error in the potential using the trapezoidal rule with
  50 quadrature points.]{
    \label{fig:strip-of-death-coarse}
    \includegraphics[height=5cm]{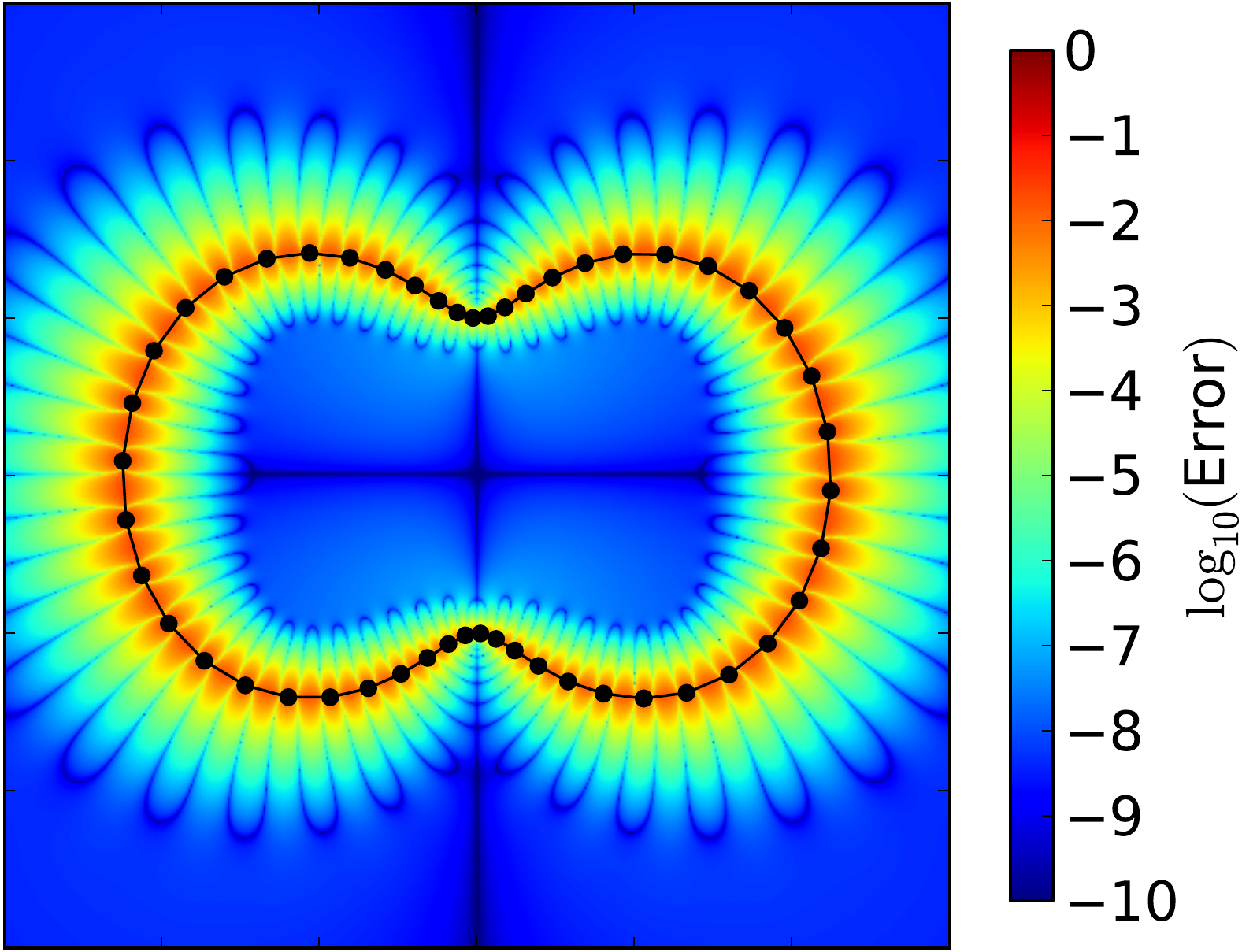}
  }
  \hfill
  \subfigure[Error in the potential using the trapezoidal rule with
  100 quadrature points.]{
    \label{fig:strip-of-death-fine}
    \includegraphics[height=5cm]{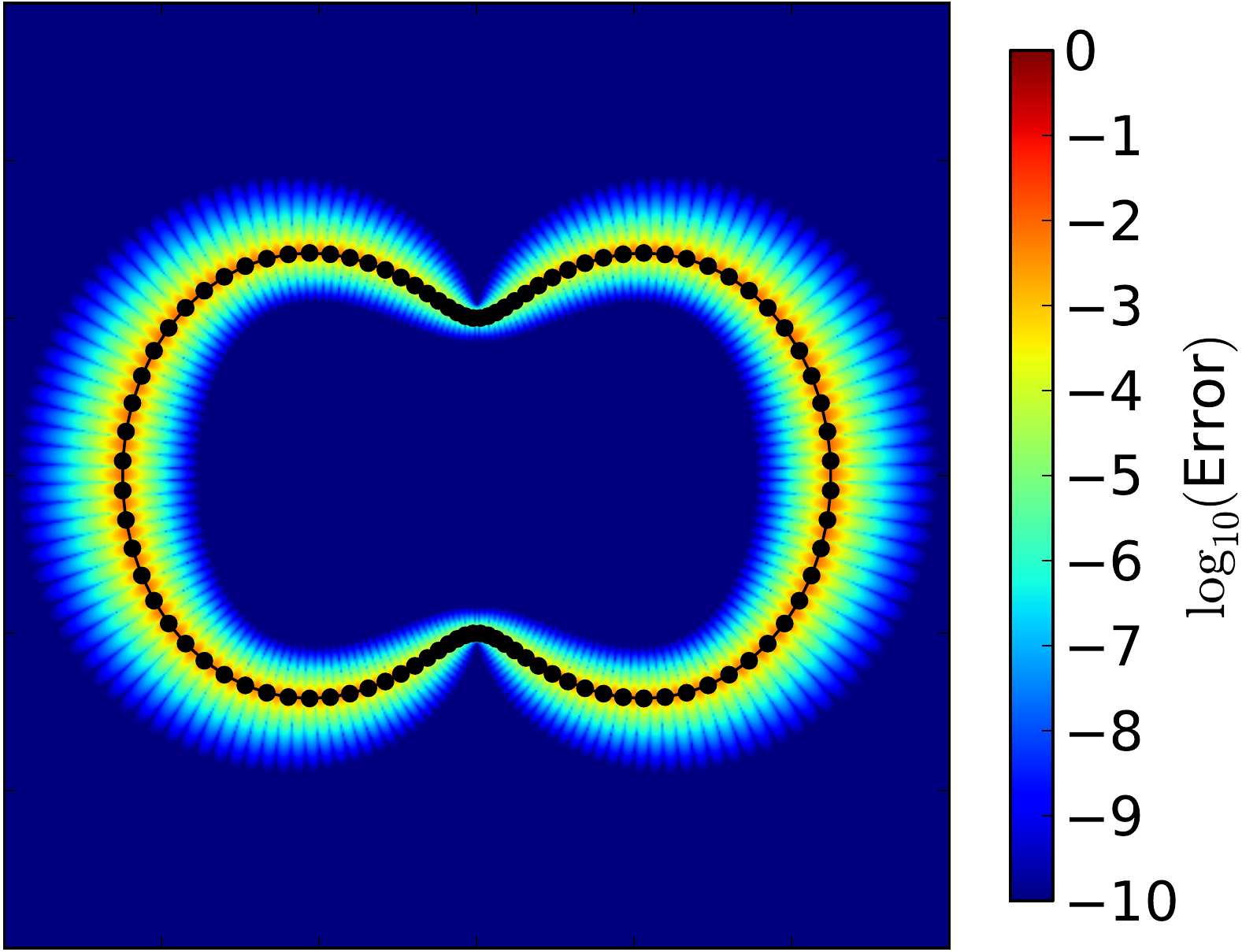}
  }
  \hfill
  \caption{The potential $S\sigma$ is computed using the trapezoidal rule,
a simple, high-order quadrature for smooth functions.}
\label{fig:strip-of-death}
\end{figure}

Let us assume, for the moment, that we are
given a smooth, simply connected closed curve $\Gamma\subset \mathbb R^2$, with a
parametrization $\Gamma = \{ \gamma(t) : 0 \leq t < L \}$.
We denote the interior of $\Gamma$ by $\Omega^-$ and its exterior by $\Omega^+$.
We also assume that we have at our disposal an underlying quadrature
rule capable of integrating smooth (non-singular) functions on $\Gamma$ to high precision.
In two dimensions, one option is the trapezoidal rule, since it is well-known
to achieve superalgebraic convergence for smooth data on closed curves
\citep{davis_1984}.

A very natural question at this point is the following: for a target
location $x$ \emph{away} from $\Gamma$, how well does the trapezoidal rule compute
$S\sigma(x)$ or $D\mu(x)$? Certainly, the integrands in
\eqref{eq:kernelint} and \eqref{eq:kernelintd} are not actually singular in this situation,
so the real question is how close $x$ can be to $\Gamma$ before accuracy is lost.
Before analyzing this error more carefully, let us carry out a simple
computational experiment for the curve
\begin{equation*}
  \gamma(t)=\begin{pmatrix}
    \frac34 \cos(t-\pi/4)(1+\sin(2t)/2)\\
    \sin(t-\pi/4)(1+\sin(2t)/2)
  \end{pmatrix},
\end{equation*}
with $0 \leq t < 2\pi$ for a Helmholtz parameter $k=0.5$.
Using either 50 nodes (Fig. \ref{fig:strip-of-death-coarse}) or
100 nodes (Fig. \ref{fig:strip-of-death-fine}),
we plot the error in both the interior and exterior of $\Gamma$.
The colors in these figures
indicate the absolute value of the pointwise error for the Helmholtz
single-layer potential $S\sigma$ with $\sigma\equiv 1$.

These figures clearly suggest that the region in which the layer potential
is inaccurate shrinks more or less in proportion to the grid spacing $h$ (a fact well-known
to practitioners of potential theory).
To be a little more precise, let
$T_N(S\sigma)$ denote the trapezoidal approximation of $S\sigma$ using $N$ points,
and let
\[ E(x) = |S\sigma(x) - T_N(S\sigma)(x)|.\]
For a fixed $\epsilon$, we define the ``high-accuracy'' region of the plane
as the subset of $\mathbb R^2$ where $E(x) < \epsilon$. This will, in essence,
be all of $\mathbb R^2$ with a neighborhood of $\Gamma$ removed.
The extent of this region depends on both $h$ and $\epsilon$
\citep{barnett_evaluation_2012}.

The fact that $h$-refinement shrinks the
region of inaccuracy, of course, is of no great value in evaluating
layer potentials at points $x$ on the curve $\Gamma$ itself.
For this, let us instead choose a point $c$ off the surface with
\[ c = x + 5h \hat n_{x}\, ,  \]
where $\hat n_{x}$ is the unit normal to $\Gamma$ at
$x$. From our initial experiment, it is reasonable to expect that $c$
is in the ``high-accuracy" region. Assuming $S\sigma$ is a smooth function
in either the interior $\Omega^-$ or the exterior $\Omega^+$, it is easy to see that
\begin{equation}
|T_N(S\sigma)(c)-S\sigma(x)| =
|(T_N(S\sigma)(c)- S\sigma(c)) + (S\sigma(c) - S\sigma(x))| =
O(\epsilon + h)
\label{eq:first-order-estimate}
\end{equation}
since $|c-x| = O(h)$, under the assumption that
the trapezoidal rule is accurate to precision $\epsilon$.
In other words, the approximate value $T_N(S\sigma)(c)$ is a
first-order accurate approximation of the on-surface value
$S\sigma(x)$, within the error $\epsilon$.

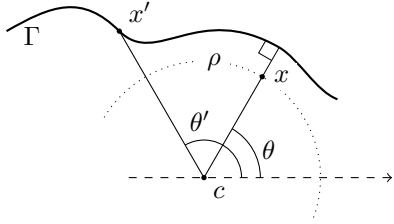
\begin{figure}
  \centering
  \begin{tikzpicture}
    \coordinate (c) at (0,0) ;
    \path (c) ++(45+75:2.25) coordinate (s);
    \path (c) ++(45+15:1.55) coordinate (t);
    \path (c) ++(45+15:2) coordinate (t-to-curve);

    \path (s) ++(-1.5,0) coordinate (curve-before);
    \path (t) ++(1,-0.3) coordinate (curve-after);
    \draw [thick]
      (curve-before)
      ..controls +(30:0.7) and +(180-45:0.7) ..
      (s)
      node [pos=0.2,anchor=north] {$\Gamma$}
      ..controls +(-45:0.7) and +(45+90+15:1.25) ..
      (t-to-curve)
      ..controls +(45+270+15:0.3) and +(160:0.3) ..
      (curve-after) ;

    \fill (c) circle (1pt);
    \fill (t) circle (1pt);
    \fill (s) circle (1pt);

    \draw [dotted] ++(-20:1.55) arc (-20:45+105:1.55);

    \node at (85:1.55) [fill=white] {$\rho$};
    \draw (c) -- (t) ;
    \draw (c) -- (s) ;
    \node at (c) [anchor=north west] {$c$};
    \node at (s) [anchor=south west] {$x'$};
    \node at (t) [anchor=west,fill=white,xshift=0.5mm,inner sep=1mm] {$x$};
    \draw [dashed,->] (c) ++(-1, 0) -- ++(3.5,0);

    \draw (c) ++(0.5,0) arc (0:45+75:0.5);
    \path (c) ++(45*0.8+75*0.8:0.5)
      node [anchor=south] {$\theta'$};

    \draw (c) ++(0.75,0) arc (0:45+15:0.75) ;
    \path (c) ++(45/2+15/2:0.75)
      node [anchor=west] {$\theta$};

    \draw [very thin] (t) -- (t-to-curve);
    \draw [very thin]
      let
        \p1 = ($ 0.1*(t-to-curve) - 0.1*(c) $),
        \p2 = (-\y1,\x1)
      in
      ($(t-to-curve)!0.1!(c)$) -- ++(\p2) -- ++(\p1) ;

  \end{tikzpicture}
  \caption{
    Geometric situation of Graf's addition theorem
    with sources along the curve $\Gamma$, as
    used in \eqref{eq:local-exp} and
    \eqref{eq:graf-coefficient}. Note that $x$ will
    reside on $\Gamma$ further on in the discussion.
  }
  \label{fig:graf-addition}
\end{figure}

Remarkably, it is straightforward to improve matters even further.
Instead of evaluating $S\sigma(c)$, let us \emph{expand} $S\sigma$
about $c$ to order $p$.  The classical separation-of-variables
representation of a smooth solution to the homogeneous Helmholtz
equation in a disk centered at $c$ takes the form
\begin{equation}
 \phi(x) = \sum_{l=-\infty}^\infty \alpha_l J_l(k \rho) e^{-i l \theta}
 \label{eq:local-exp}
\end{equation}
where $(\rho,\theta)$ denote the polar coordinates of the target $x$
with respect to the expansion center $c$, and $J_l$ is the Bessel
function of order $l$ (see Fig. \ref{fig:graf-addition}).  For the
single layer potential $S\sigma$, the coefficients $\alpha_l$ in the
expansion (\ref{eq:local-exp}) can be computed analytically:
\begin{equation}
  \alpha_l = \frac{i}{4} \,
  \int_\Gamma H^{(1)}_l(k|x'-c|) e^{i l \theta'} \sigma(x') \intd x',
  \qquad
  (l = -p, -p+1, \ldots, p)
  \label{eq:graf-coefficient}
\end{equation}
where $(|x'-c|,\theta')$ denote the polar coordinates of the point $x'$ with respect
to $c$.
These formulas follow immediately from
Graf's addition theorem \citep[][\dlmf{10.23.7}]{olver_nist_2010},
\begin{equation}
  H^{(1)}_0(k|x-x'|) = \sum_{l=-\infty}^\infty H^{(1)}_l(k|x'-c|) e^{i l \theta'}
  J_l(k |x-c|) e^{-i l \theta},
  \label{eq:graf-addition}
\end{equation}
by interchanging the order of summation and integration.
We note that Graf's addition theorem is generally applicable only if
the target $x$ is closer to the center than the source $x'$:
\begin{equation}
|x-c|<|x'-c|.
\label{eq:x-c-closest}
\end{equation}

\begin{figure}
  \hfill
  \subfigure[$p=3$, $N=80$ quadrature nodes]{
    \label{fig:expansions-low-order}
    \includegraphics[height=5.2cm]{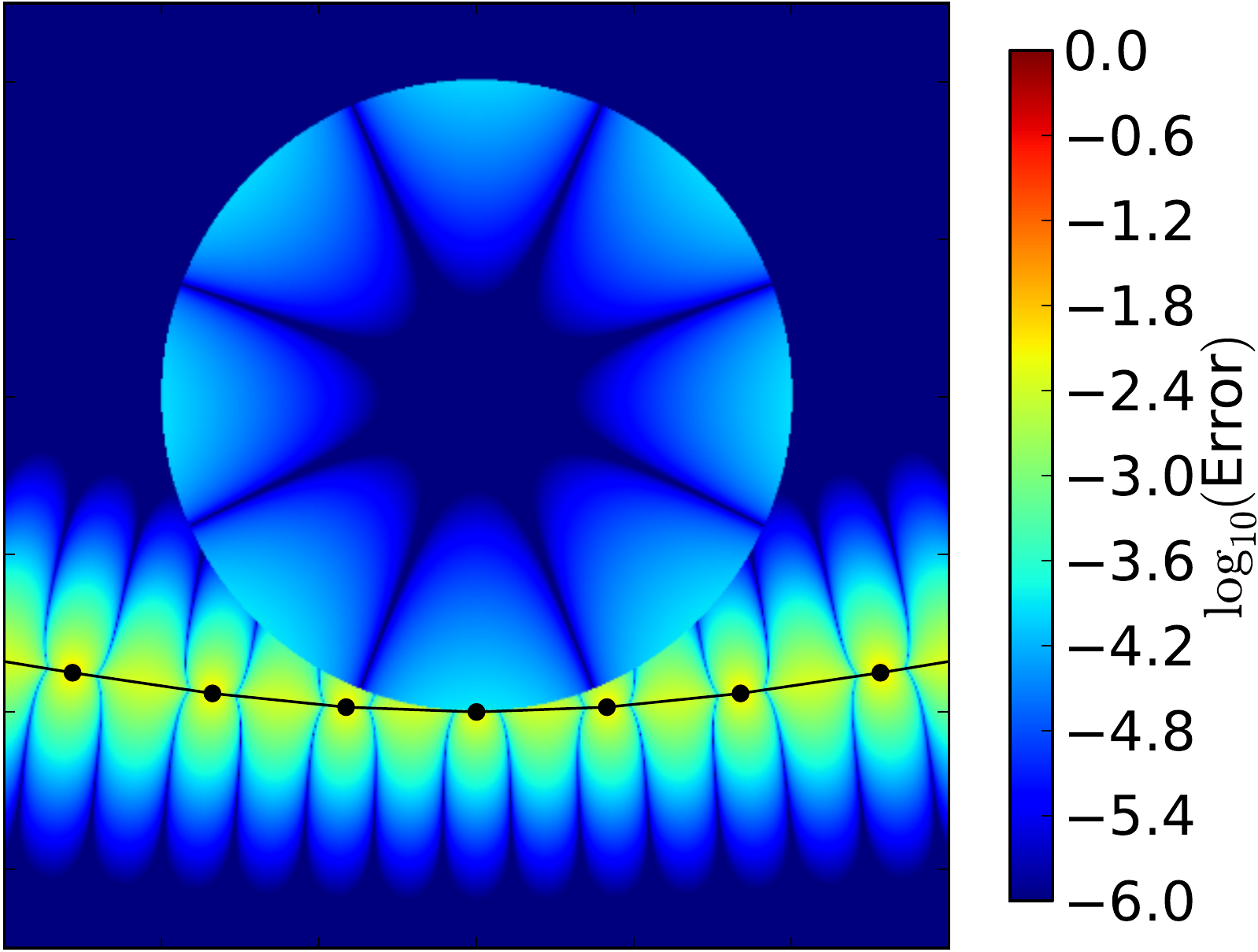}
  }
  \hfill
  \subfigure[$p=6$, $N=80$ quadrature nodes]{
    \label{fig:expansions-med-order}
    \includegraphics[height=5.2cm]{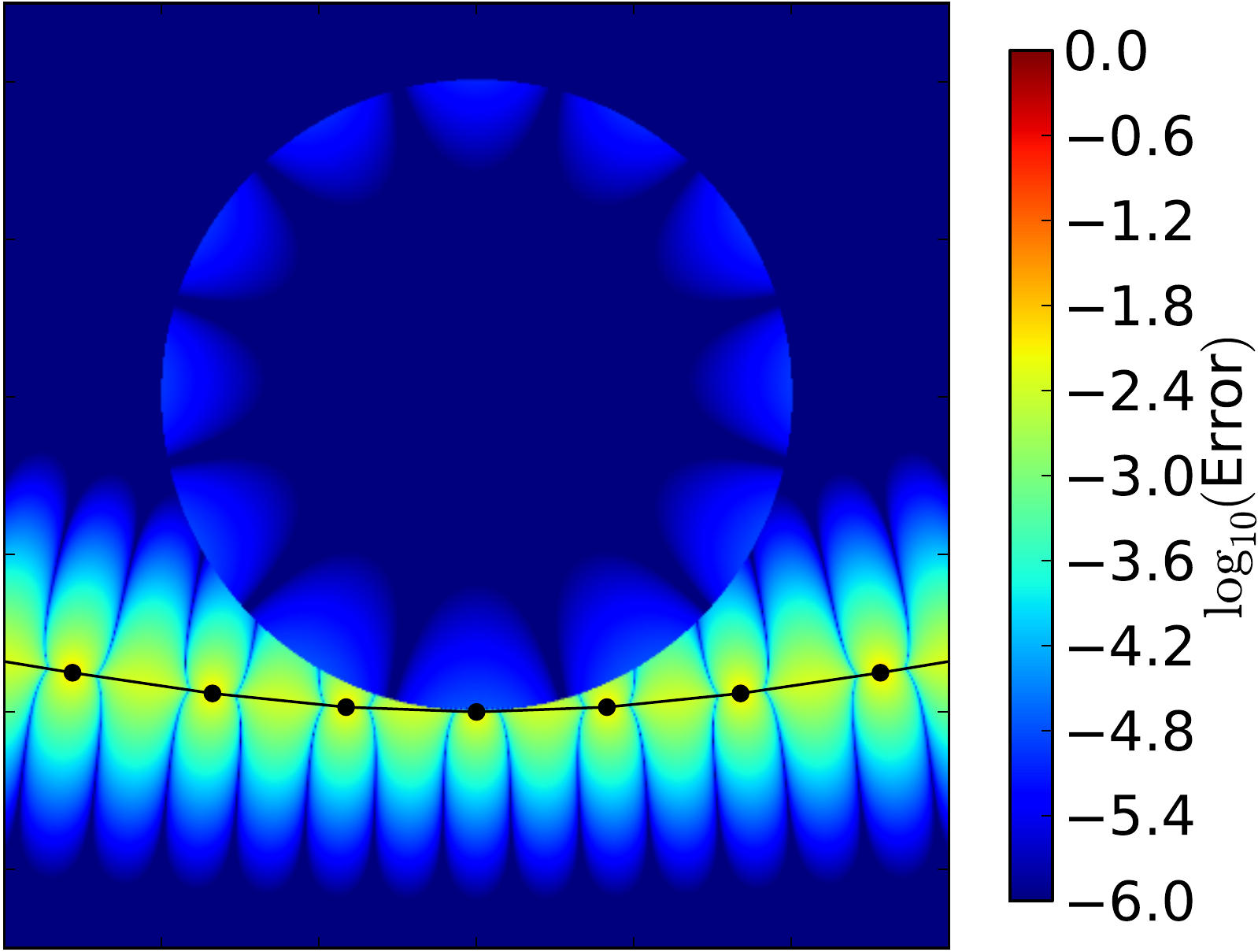}
  }
  \hfill

  \hfill
  \subfigure[$p=12$, $N=80$ quadrature nodes]{
    \label{fig:expansions-high-order-bad}
    \includegraphics[height=5.2cm]{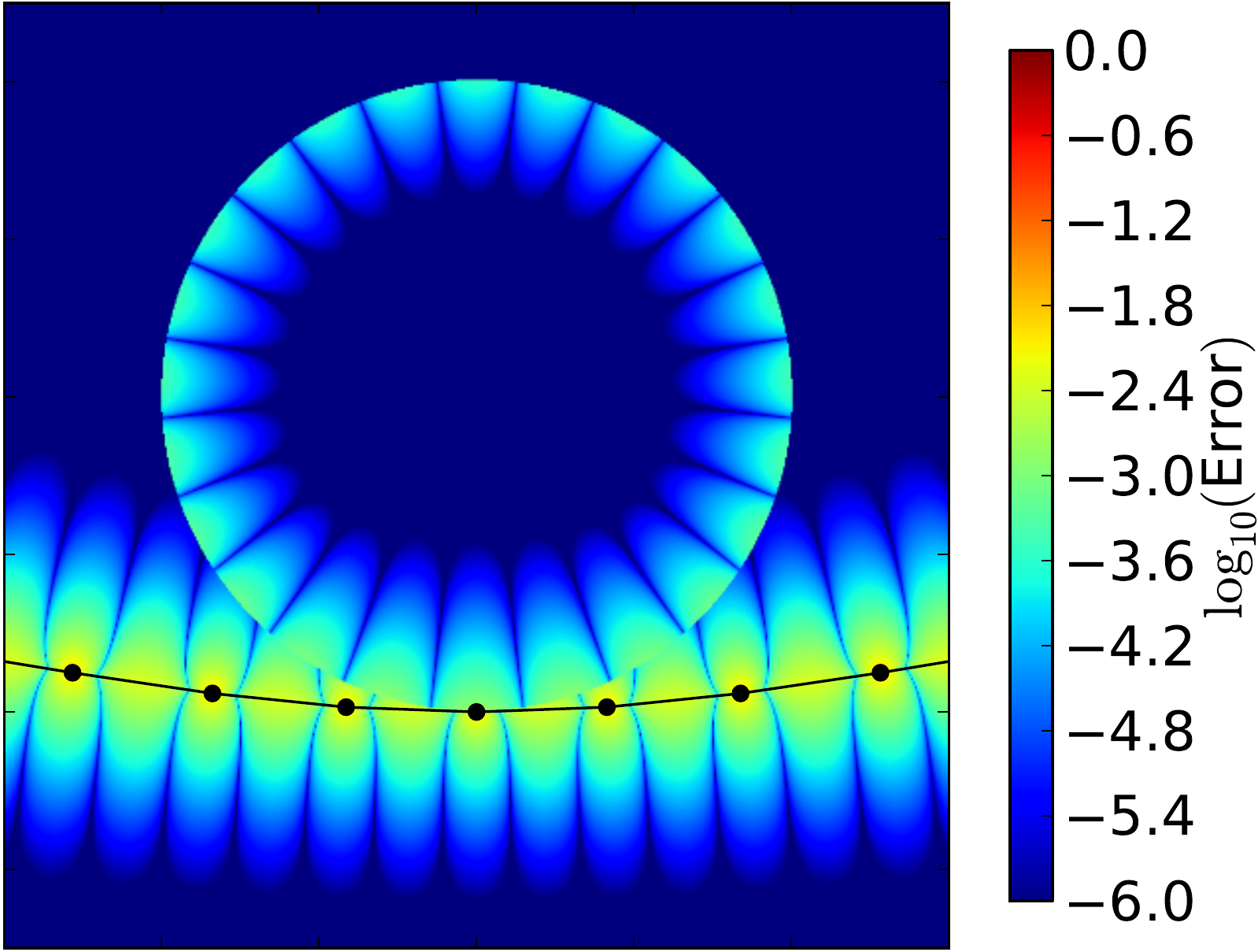}
  }
  \hfill
  \subfigure[$p=12$, $N=240$ quadrature nodes]{
    \label{fig:expansions-high-order-good}
    \includegraphics[height=5.2cm]{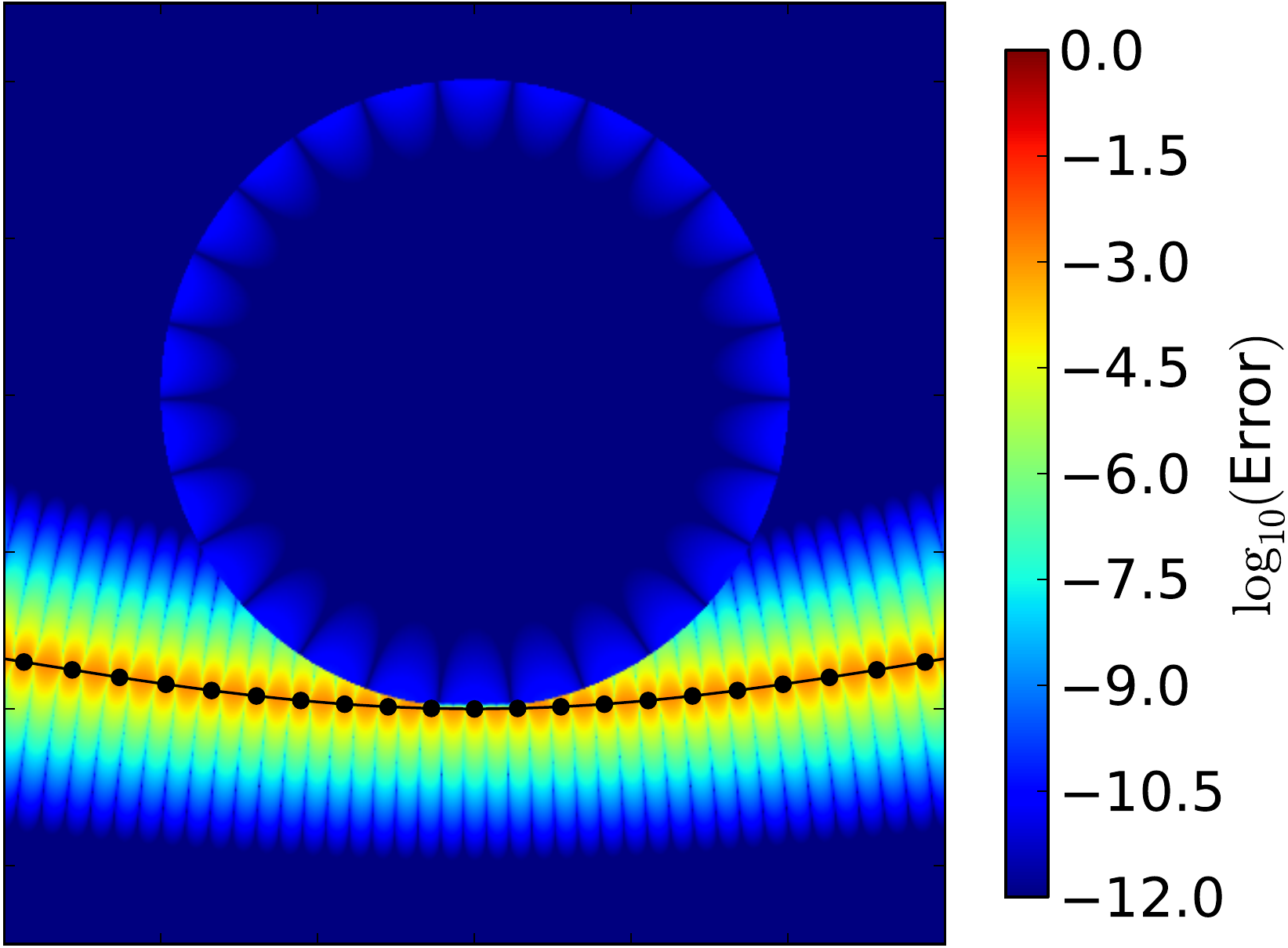}
  }
  \hfill
  \caption{The potential $S\sigma$ is computed using the trapezoidal rule
$\Gamma$ with either $N = 80$ or $N=240$ points, except in a disk of radius
$|c - x|$ centered at an off-surface point $c$ that lies in the
``high-accuracy" region of the trapezoidal rule (here, approximately
$3h$ away from the curve in a), b), and c)). Only a portion of
the boundary $\Gamma$ is plotted, and $x$ is the point
where the disk and $\Gamma$ are tangent.
We plot the error in the disk using various expansion orders $p$ and
numbers of quadrature nodes.}
\label{fig:accuracy-story}
\end{figure}

The integral defining $\alpha_l$ is similar to that
defining the original layer potential, except that $H^{(1)}_0(k|x'-c|)$ has
been replaced with the more complicated but still smooth function
$H^{(1)}_l(k|x'-c|) e^{i l \theta'}$.
Because of this smoothness, we evaluate $\alpha_l$ using the same trapezoidal rule $T_N(\alpha_l)$.
When seeking to evaluate $S\sigma(x)$, however, we evaluate the local expansion
(\ref{eq:local-exp}) instead.
Figure \ref{fig:expansions-low-order} shows the result of using an expansion
of order $p=3$ superimposed on the naive use of the trapezoidal rule $T_N$ to
compute the single layer potential directly. The circular `cut-out'
regions in this and the following figures indicate where local
expansions were used to approximate $S\sigma$. Note that
the error in $S\sigma$ computed by
the local expansion is \emph{far} smaller than the error of the naive
computation throughout the circular region.
In effect, the local expansion allows us to
punch a disk-shaped hole into the region of inaccuracy.
This is precisely the idea underlying the close evaluation scheme
of \citet{barnett_evaluation_2012} for targets near, but not on, $\Gamma$.

In this paper, we take the approach one step further.
Namely, we investigate the use of the expansion
\eqref{eq:local-exp} in evaluating the layer potential $S\sigma(x)$ for
$x$ that actually lie on the curve $\Gamma$.

Formally, it is worth noting that the radius of convergence of the
local expansion about $c$ is $r = \min_{x'\in\Gamma} |x'-c|$ as Graf's
addition theorem requires it. Thus, we are seeking to evaluate a local expansion
{\em at} its radius of convergence where the accuracy is most difficult to
analyze. This difficulty, however, stems from the use of the addition theorem
for a singular field (the potential due to a point source $H^{(1)}_0$).
Figure \ref{fig:expansions-low-order} shows
that the expansion is, in fact, accurate:
it provides about four digits of precision uniformly.
Loosely speaking, accuracy follows from the fact that the field induced by the
layer potential is (one-sided) smooth in the interior and exterior domains $\Omega^-$ or $\Omega^+$.
The analytic issues here are somewhat involved and concern estimates
on the decay of the coefficients $\alpha_l$ in terms of the smoothness of the curve
$\gamma(t)$ and the density $\sigma(t)$. Those estimates are established
in \citep{epstein_convergence_2012}, and we will invoke them, as needed, below.

If using an expansion of order $p=3$ provides an accurate value for $S\sigma(x)$,
is it perhaps possible to obtain even more
accuracy by further increasing $p$? Figure
\ref{fig:expansions-med-order} shows the results of such an experiment.
By setting $p=6$, the accuracy of the potential in the vicinity of (and really also \emph{on} $\Gamma$)
increases from four to six digits.
Further increasing the order to $p=12$, however, causes a significant
{\em loss} of accuracy (Fig. \ref{fig:expansions-high-order-bad}).
A consideration of the integrand in
\eqref{eq:graf-coefficient} shows why this occurs. As
$p$ is increased, both factors in the integrand $H^{(1)}_l(k|x'-c|) e^{i l \theta'}$
increase in complexity: $e^{i l \theta'}$ by becoming more oscillatory,
and $H^{(1)}_l(k|x'-c|)$ by becoming more sharply peaked.
This combined effect leads to the resolution of the underlying trapezoidal rule
being exceeded.  Thus, in the experiment of Figure
\ref{fig:expansions-high-order-bad}, the coefficients $\alpha_l$ are both large
and wrong.  Fortunately, once identified, this issue is easy to resolve.
Indeed, simply increasing the number of points in the trapezoidal rule
compensates for the added complexity of the integrands involved in computing the higher
order coefficients, and with $p=12$ more than ten digits of accuracy are
achieved (Fig. \ref{fig:expansions-high-order-good}).

The sequence of experiments described thus far suggest a path to
the high-order accurate evaluation of layer potentials as operators on the boundary.
It also highlights one aspect of the scheme
that requires careful analysis, namely the interplay between
$h=1/N$ and $p$.
The local grid spacing $h$ must be chosen small enough so that the coefficients in the local expansion
\eqref{eq:local-exp}
are computed
with the necessary precision. We have concentrated in our experiments on a single
boundary point $x$. To evaluate
$S\sigma$ everywhere on the boundary, we will simply introduce a
large number of off-surface expansions centers whose corresponding disks cover a
neighborhood of $\Gamma$. We choose our expansion centers so that
any desired target point is in the interior or on the boundary of one of these disks,
enabling the application of QBX. If a target point is not in any of these disks,
by definition it will be in the ``high-accuracy'' region
associated with the original trapezoidal approximation.
In the simplest approach, one may introduce an expansion center for each
discretization node on the boundary.
The procedure applied above to the single layer potential can be used just as well
for the evaluation of integrals with hypersingular kernels.

None of the observations made above
change substantially if we replace the trapezoidal rule with another
high-order quadrature.
Figure \ref{fig:legendre-strip-of-death} presents the
analog of Figure \ref{fig:strip-of-death} for a circle discretized
using composite Gauss-Legendre quadrature.
Using ideas from \citet{barnett_evaluation_2012}, we believe that
the error contours (ignoring nodal oscillations) are the
conformal images of the Bernstein ellipses \citep{davis_1984}
associated with the integrand on each panel.

\begin{figure}
  \hfill
  \subfigure[Error in potential from (smooth) composite Gauss-Legendre
    quadrature, with 5 panels consisting of 10 quadrature nodes each.]{
    \includegraphics[height=5cm]{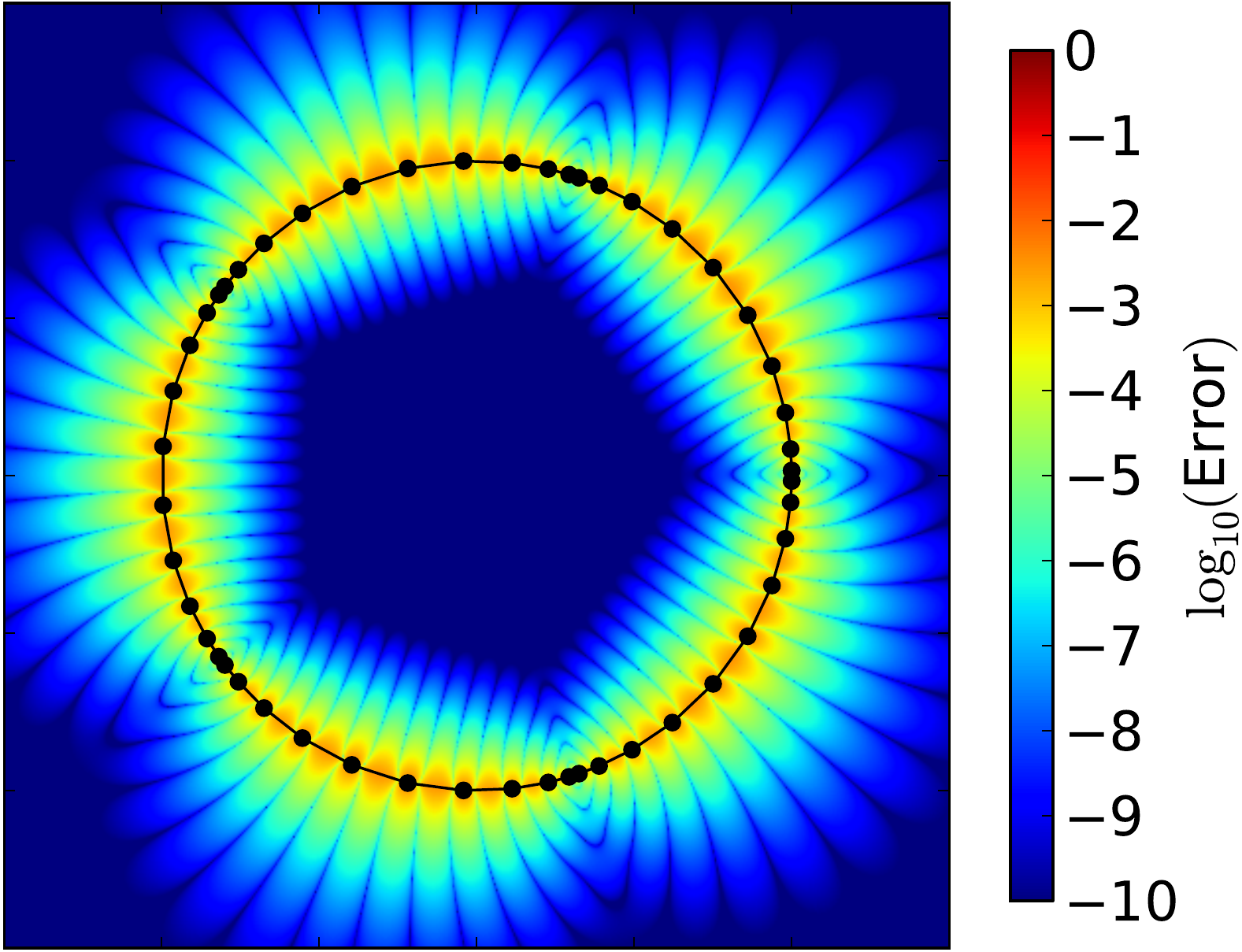}
  }
  \hfill
  \subfigure[Error in potential from (smooth) composite Gauss-Legendre
    quadrature, with
    10 panels consisting of 10 quadrature nodes each.]{
    \includegraphics[height=5cm]{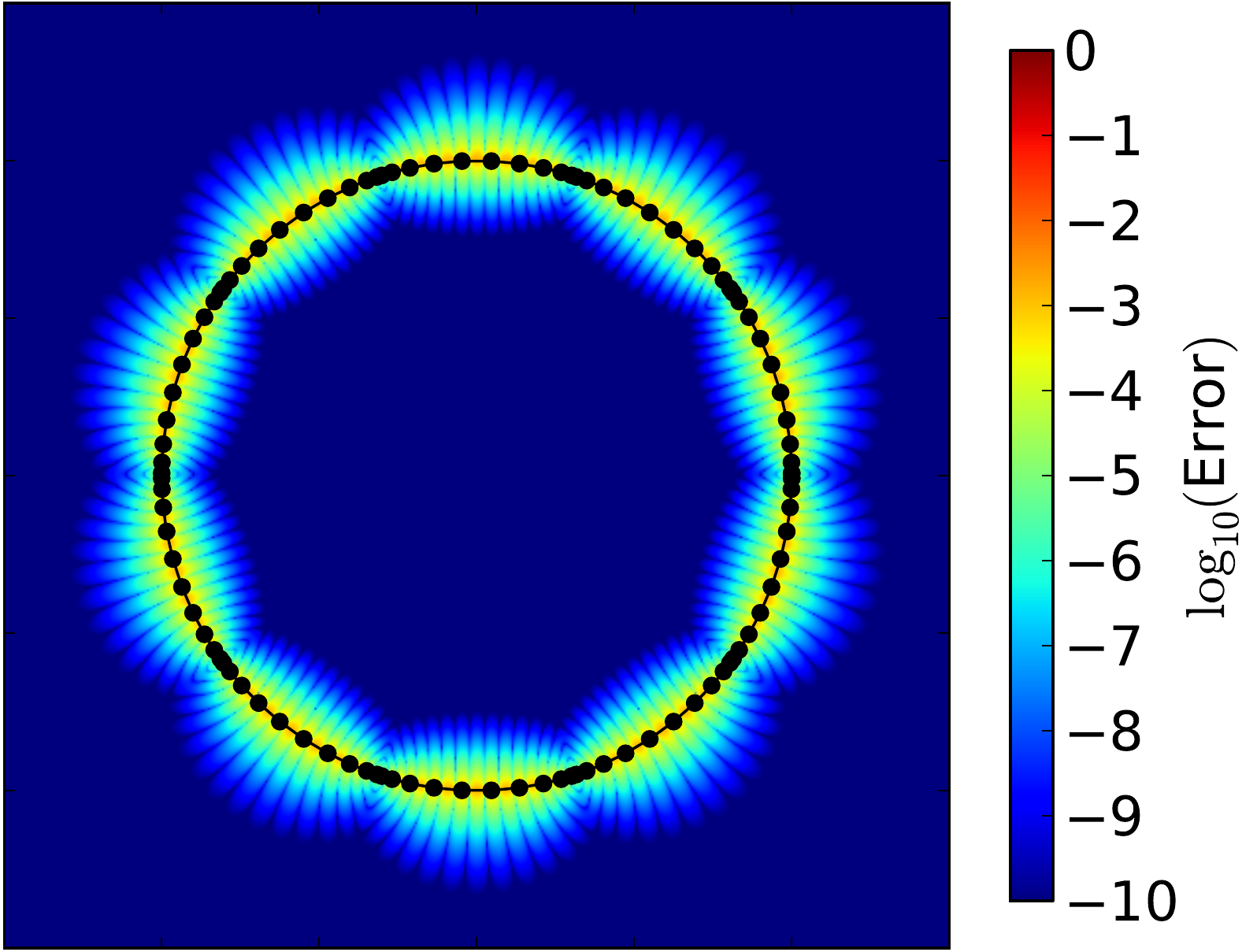}
  }
  \hfill
  \caption{The potential $S\sigma$ computed using composite 10$th$ order
    Gauss-Legendre quadrature.
  }
  \label{fig:legendre-strip-of-death}
\end{figure}

\subsection*{QBX as a regularization scheme}
\label{sec:regularization}
For readers familiar with multipole/partial wave expansions, the
numerical results above may come as a surprise. After all, given a finite
set of quadrature nodes (as in our computational examples), if the
order of the local expansion were sufficiently high, it should converge
to the field induced by a finite set of singular sources, namely the
quadrature nodes. Instead, it is reproducing the continuous layer
potential, even at the quadrature nodes themselves. There are two
interpretations of this fact.

For some, it is most natural to understand this in terms of series
approximations of smooth functions, as introduced above.  For others, it is
perhaps useful to interchange the order of summation and integration and write
\[
  S\sigma(x) \approx \int_\Gamma
  G_p(x,x') \sigma(x') \intd x',
\]
where
\begin{equation}
  G_p(x,x') = \sum_{l=-p}^p
  H^{(1)}_l(k|x'-c|) e^{i l\theta'} J_l(k |x-c|) e^{-i l\theta},
  \label{eq:qbx-kernel}
\end{equation}
for a target $x$ on or near the boundary.  That is, we can interpret
the entire procedure as substituting the original Green's function
$G$ with $G_p$. It turns out that $G_p$ is a surprisingly good filter.
It regularizes the kernel in such a way that high-order accuracy is
achieved without the need for additional correction. From this
perspective, the need to decrease $h$ with $p$ is due to the fact that
$G_p$ itself is a more and more sharply peaked integrand as $p$
increases.

\section{Mathematical Foundations of QBX}
\label{sec:math-details}
\subsection{Error analysis}
\label{sec:analysis}
We turn now to the
principal result justifying the use of QBX as a quadrature scheme.
We restrict our attention to composite Gauss-Legendre quadrature, but
the proof is analogous for any smooth high-order rule.
In what follows, $h$ will be used to denote the \emph{panel size}
in the composite Gauss-Legendre grid, rather than the point spacing
used previously in discussing the trapezoidal rule. We apologize
for this abuse of notation.

\begin{thm} \label{thm:qbx}
Suppose that $\Gamma$ is a smooth, bounded curve
  embedded in $\bbR^2,$ that $B_r(c)$ is the ball of radius
  $r$ about $c$, and that $\overline{B_r(c)} \cap \Gamma = \{x\}$.
Let $\Gamma$ be divided into $M$ panels, each of length
$h$ and let $q$ be a non-negative integer that defines
the number of nodes of the smooth Gaussian quadrature used to compute the coefficients
$\alpha^{\mathrm{QBX}}_l$ according to the formula (\ref{eq:graf-coefficient}).
For $0<\beta<1,$ there are constants $C_{p,\beta}$ and $\tilde C_{p,q,\beta}$
so that if $\sigma$ lies in the Hölder space
$\cC^{p,\beta}(\Gamma)\cap \cC^{2q,\beta}(\Gamma)$, then
\begin{equation}
  \left| S\sigma(x) -
  \sum_{l=-p}^{p}\alpha^{\mathrm{QBX}}_l J_{l}(k|x-c|)e^{-il\theta_{cx}}\right|
  \leq
  \Bigg(
    \underbrace{
      C_{p,\beta} \; r^{p+1}
      \|\sigma\|_{\cC^{p,\beta}(\Gamma)}
    }_{\text{Truncation error}}
    +
    \underbrace{
      \tilde C_{p,q,\beta} \left( \frac{h}{4r} \right)^{2q}
      \|\sigma\|_{\cC^{2q,\beta}(\Gamma)}
    }_{\text{Quadrature error}}
  \Bigg).
  \label{eq:underlying-quad-estimate}
\end{equation}
\end{thm}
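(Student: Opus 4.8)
The plan is to split the error into exactly the two pieces indicated in \eqref{eq:underlying-quad-estimate}: a \emph{truncation error} from replacing the full partial-wave series \eqref{eq:local-exp} by its symmetric partial sum $|l|\le p$, and a \emph{quadrature error} from replacing the exact Graf coefficients \eqref{eq:graf-coefficient} by their $q$-node Gauss–Legendre approximations $\alpha_l^{\mathrm{QBX}}$ computed panel by panel. Writing $\alpha_l$ for the exact coefficient, insert the intermediate quantity $\sum_{|l|\le p}\alpha_l J_l(k|x-c|)e^{-il\theta_{cx}}$ and apply the triangle inequality. Since $\overline{B_r(c)}\cap\Gamma=\{x\}$ and $|x-c|=r$, the point $x$ sits exactly on the boundary of the disk of convergence, so $|J_l(k|x-c|)e^{-il\theta_{cx}}|=|J_l(kr)|$ for every term, and $S\sigma$ restricted to the (closed) relevant side of $\Gamma$ is one-sided smooth and solves the homogeneous Helmholtz equation inside $B_r(c)$, so the full series converges to $S\sigma(x)$ at $x$.

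For the truncation term I would invoke directly the coefficient-decay estimates of \citet{epstein_convergence_2012}: under the hypothesis $\sigma\in\cC^{p,\beta}(\Gamma)$ together with smoothness of $\Gamma$, those estimates give a bound of the form $|\alpha_l|\,|J_l(kr)| \lesssim (\text{const})\, r^{|l|}\,\|\sigma\|_{\cC^{p,\beta}(\Gamma)}$ for the tail, or more precisely control the sum $\sum_{|l|>p}|\alpha_l J_l(kr)|$ by $C_{p,\beta}\,r^{p+1}\|\sigma\|_{\cC^{p,\beta}(\Gamma)}$; this is where the $r^{p+1}$ factor and the constant $C_{p,\beta}$ come from, and it is essentially a quotation rather than new work. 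The key mechanism is that $J_l(kr)\sim (kr/2)^{|l|}/|l|!$ for large $|l|$, so each retained and discarded coefficient carries a geometric-in-$r$ weight, and the smoothness of $\sigma$ controls how slowly $\alpha_l$ can grow; the balance produces the stated power of $r$.

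For the quadrature term, for each of the $2p+1$ indices $l$ with $|l|\le p$ I would estimate $|\alpha_l - \alpha_l^{\mathrm{QBX}}|$ using the standard error bound for $q$-node Gauss–Legendre quadrature on a panel of length $h$: the error is controlled by $(h/2)^{2q}$ times (a constant depending on $q$) times the $\cC^{2q}$-norm — here the $\cC^{2q,\beta}$-norm — of the integrand $H_l^{(1)}(k|x'-c|)e^{il\theta'}\sigma(x')$ over that panel, summed over the $M$ panels. The integrand is smooth because $c$ is off $\Gamma$, so $|x'-c|$ is bounded below; differentiating $H_l^{(1)}(k|x'-c|)e^{il\theta'}$ up to order $2q$ brings down the growth in $l$ and the inverse powers of $\operatorname{dist}(c,\Gamma)$, but since $\operatorname{dist}(c,\Gamma)$ is comparable to $r$ (indeed $\overline{B_r(c)}$ touches $\Gamma$ only at $x$), these combine into the factor $(h/(4r))^{2q}$; multiplying by $|J_l(kr)|\le 1$-type bounds and absorbing the sum over $l$ and over panels into the single constant $\tilde C_{p,q,\beta}$ yields the second term. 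The main obstacle is making the dependence on $r$ explicit and uniform: one must verify that the derivatives of the Hankel-function integrand, and the Bessel weights $J_l(kr)$, can be packaged so that exactly $r^{-2q}$ (against the panel factor $h^{2q}$) appears in the quadrature term while exactly $r^{p+1}$ appears in the truncation term — in particular that no hidden negative power of $r$ leaks from the truncation estimate, which is precisely why the Hölder regularity $\cC^{p,\beta}\cap\cC^{2q,\beta}$ (rather than mere $\cC^\infty$) is the natural hypothesis and why the Epstein–Klöckner bounds must be used in the sharp form stated there.
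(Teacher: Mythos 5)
Your proposal is correct and takes essentially the same route as the paper: the same triangle-inequality split using the exact truncated expansion as the intermediate quantity, the same appeal to \citet{epstein_convergence_2012} for the $r^{p+1}\|\sigma\|_{\cC^{p,\beta}}$ truncation bound, and the same per-panel $q$-node Gauss--Legendre error estimate combined with Stirling and derivative bounds on the Hankel-function integrand to produce the $(h/4r)^{2q}$ quadrature term. One trivial tightening: since $\overline{B_r(c)}\cap\Gamma=\{x\}$, the distance $\operatorname{dist}(c,\Gamma)$ equals $r$ exactly rather than being merely comparable to it, which is what lets the $r^{-2q}$ factor appear cleanly.
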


\begin{proof}
  We begin by writing
\begin{equation}
E =
  \left| \left(S\sigma(x) -
\sum_{l=-p}^{p}\alpha_l J_{l}(k|x-c|)e^{-il\theta_{cx}} \right) +
\sum_{l=-p}^{p}\left(\alpha_l  - \alpha^{\mathrm{QBX}}_l\right)
  J_{l}(k|x-c|)e^{-il\theta_{cx}}
\right| .
\label{error_anal}
\end{equation}
The first term stems from using a \emph{truncated} $p$-term
expansion in Bessel functions to approximate $S\sigma$, while the second
term is the error that stems from the numerical approximation of the coefficients
in the truncated series. It is shown in \citep{epstein_convergence_2012} that the first error
is of the order $r^{p+1} \, \|\sigma\|_{\cC^{p,\beta}(\Gamma)}$.
For the second term, we note that on a curve segment $\Gamma_i$ of length $h$, the
standard estimate for $q$-point Gauss-Legendre quadrature
is \cite[][(2.7.12)]{davis_1984}
\begin{eqnarray}
\left| \int_{\Gamma_i}
  H^{(1)}_l(k|x'-c|) e^{i l \theta'} \sigma(x') \intd x' -
  \sum_{j=1}^q
  H^{(1)}_l(k|x_j-c|) e^{i l \theta_j} \sigma(x_j) \, w_j \right|
\hspace{2in} \hfill
\nonumber \\
\hspace{1in} \leq
 \frac{h^{2q+1}}{2q+1}\frac{(q!)^4}{(2q)!^3}
 \|
 D^{2q} (H^{(1)}_l(k|x'-c|) e^{i l \theta'} \sigma(x'))
 \|_{\infty,\Gamma_i}
\label{GLestimate}
\end{eqnarray}
where $D^{n}$ denotes the $n^{\rm th}$ derivative of the integrand with respect to the
integration parameter along $\Gamma_i$.
A straightforward combination of Stirling's approximation
\[ \sqrt{2\pi} n^{n+\frac{1}{2}} e^{-n} < n! <
 2 \sqrt{\pi} n^{n+\frac{1}{2}} e^{-n}, \]
summing over all panels, and bounds on the derivative
allow us to write this term as
\begin{equation}
\bigg|\sum_{l=-p}^{p}\alpha_l J_{l}(k|x-c|)e^{-il\theta_{cx}} -
\sum_{l=-p}^{p}\alpha^{\mathrm{QBX}}_l J_{l}(k|x-c|)e^{-il\theta_{cx}} \bigg|
\leq C_{p,q,\beta} \left( \frac{h}{4r} \right)^{2q}.
\label{qbxquad-error}
\end{equation}
Combining the two estimates yields the desired result.
\end{proof}

There are several aspects of the preceding theorem that are worth noting.
\begin{itemize}
\item
The two contributors to the error in the QBX
approximation (\ref{eq:underlying-quad-estimate})
are quite different. By placing an expansion center at a distance
$r = O(h)$ away from the point $x \in \Gamma$, the analytic truncation error
is of order $h^{p+1}$.
This error goes to zero under quadrature mesh refinement.
In order for the second component of the error to be small, however, we need
$\frac{h}{4r} < 1$, so that $r > h/4$. A requirement of this type is essential,
corresponding to the fact that if the expansion center is too close to the
boundary relative to the discretization, accuracy will be lost.
It is perhaps informative to set $r = h/2$,
and write the error $E$ from
(\ref{error_anal}) in the form
\[  E = O(\epsilon + h^{p+1})\, , \]
where $\epsilon = \left( \frac{1}{4} \right)^q$.
Used in this manner, QBX is not classically convergent,
but converges with controlled precision.
\item
If one wants to achieve a classically convergent scheme, it suffices
to refine $r$ more slowly than $h$
(say, with  $r = \sqrt{h}$).
We prefer to keep the error components separate for the sake of clarity and
because it permits additional tests of numerical consistency.
\item
Note that
the estimate (\ref{eq:underlying-quad-estimate})
explains the behavior of QBX discussed in Section \ref{sec:idea},
particularly the results shown in Fig. \ref{fig:expansions-high-order-bad}.
\item
For the sake of simplicity, Theorem \ref{thm:qbx} assumes that the curve
$\Gamma$ is divided into equal-sized segments. In practice, with an adaptive
discretization of the curve, a slightly different version of the result is needed.
Since the difficulty in the error analysis is entirely local and the estimates
are similar to those obtained above, we omit the rather cumbersome analysis.
\item In practice, one is often interested in evaluating
the double-layer potential $D\mu$, or some derivative of $S\sigma$ or $D\mu$.
Straightforward analysis shows that, for $n$ derivatives of the Green's function,
the error estimate in (\ref{eq:underlying-quad-estimate}) is multiplied by a
factor of $r^{-n}$.  The use of QBX
for such calculations is discussed in the next section.
\end{itemize}

\subsection{Derivatives, jumps, and principal value integrals}
\label{sec:derivatives}
Up to this point, we have focused on the calculation of the
single-layer potential $S\sigma$. For the double layer $D\mu$ defined in
\eqref{eq:kernelintd}, the scheme is only slightly different.
The coefficients in \eqref{eq:graf-coefficient} are simply replaced by
\begin{equation}
  \alpha^D_l = \frac{i}{4} \,
  \int_\Gamma \frac{\partial}{\partial \hat n_{x'}} H^{(1)}_l(k|x'-c|) e^{i l \theta'} \mu(x') \intd x'.
  \qquad
  (l = -p, -p+1, \ldots, p)
  \label{eq:graf-coefficient-dlp}
\end{equation}
Because the scheme relies on a local expansion of the potential,
subsequent derivatives of $S\sigma$ or $D\mu$ with respect to the \emph{target} location $x$
are particularly easy to obtain by analytic differentiation of the local (Bessel) expansion.

There is a complication which must be dealt with in evaluating
operators other than the single layer potential (which is only weakly
singular). QBX, by its construction, evaluates the one-sided limit of
a layer potential, with the side determined by the location of the expansion center.
In practice, however, one might want to compute the integral
\begin{align}
  D\mu(x) :=
\int_\Gamma \frac{\partial G}{\partial \hat n_{x'}}(x,x')\mu(x') \intd x' \, ,
  \label{eq:kernelintdpv}
\end{align}
for $x\in \Gamma$. As an operator acting on the boundary, $D$ has an
integrable kernel and $D\mu$ is well-defined \cite{kress_1999}.
$D\mu$ is not,
however, equal to its one-sided limit. Using the superscripts
$+$ and $-$ to denote a point in the exterior or interior of
$\Gamma$, respectively,
the following jump relations are well-known \citep{atkinson_1997,brebbia_1984,kress_1999}).

We assume $x\in\Gamma$ for the remainder of this section.
For the single-layer potential,
\begin{equation}
  S\sigma(x)
  =
  \lim_{x^\pm \rightarrow x}  S\sigma(x^\pm),
\end{equation}
for its derivative,
\begin{equation}
  \nabla_x S\sigma(x)
  =
  \lim_{x^\pm \rightarrow x}
  \nabla_{x^\pm} S\sigma(x^\pm)
  \pm \frac{1}{2} \hat n \, \sigma(x),
\end{equation}
where $\nabla_x S\sigma(x)$ is defined in the principal value sense, and
for the double-layer potential,
\begin{equation}
  D\mu(x)  = \lim_{x^\pm \rightarrow x}  D\mu(x^\pm) \mp \frac{1}{2} \mu(x).
  \label{eq:dlpjump}
\end{equation}

For higher derivatives with respect to the target location $x$, we have
\begin{align}
 \partial_{x_i}\partial_{x_j} S\sigma(x)
  &= \lim_{x^\pm \rightarrow x}
  \partial_{x^\pm_i}\partial_{x^\pm_j} S\sigma(x^\pm)
        \mp \frac \kappa2 (- \delta_{i,j} + 2\hat n_i \hat n_j)\sigma
        \pm \frac 12(\hat n_i \hat t_j + \hat t_j \hat n_i)
        \frac{d\sigma}{ds},
  \label{eq:slpderiv2}
\end{align}
where $\hat t$ is the unit tangent, assumed to satisfy
the identity
\[
\hat n = \begin{pmatrix} \phantom{-} \hat t_2 \\ -\hat t_1 \end{pmatrix}.
\]
$\kappa$ here is the curvature, $\delta$ is the Kronecker symbol and
$s$ is arc length.
The expression $\partial_{x_i}\partial_{x_j} S\sigma(x)$ is defined in the
Hadamard finite-part sense.

Finally,
in some settings, it is useful to consider derivatives of the double layer,
tangentially oriented dipoles, and mixed source/target derivatives. For these,
we have
\begin{align}
  \int_\Gamma v(x')\cdot \nabla_{x'} G(x,x') \sigma(x') \intd s
  &=
\lim_{x^\pm \rightarrow x}
  \left(\int_\Gamma v(x')\cdot \nabla_{x'} G(x^\pm,x') \sigma(x') \intd s\right)
  \mp \frac 12 (\hat n \cdot v(x))\sigma\\
  \partial_{x_i} D\sigma(x)
  &=
\lim_{x^\pm \rightarrow x}
  (\partial_{x^\pm_i} D\sigma(x^\pm))
  \mp \frac 12 \hat t_i \frac{d\sigma}{ds}(x)
\label{eq:dlpderiv}
\end{align}
In the former expression, $v$ is the direction in which the source
derivative is to be taken.
When it is tangentially oriented, $v(x') = \hat t(x')$, we denote the corresponding
operator by $R$:
\begin{equation}
 R\sigma(x) =  \int_\Gamma \hat t(x')\cdot \nabla_{x'} G(x,x') \sigma(x') \intd s.
\label{eq:riesz-def}
\end{equation}
The jump relations described in \eqref{eq:slpderiv2}-\eqref{eq:dlpderiv}
are not so well-known (see, for example, \citep{kolm_quadruple_2003}).

In summary, if the one-sided limit is the quantity of interest, then QBX computes
that directly and no post-processing work is required. If, however, the principal
value integrals $D\mu(x)$,
$\nabla_xS\sigma(x)$,
or the finite-part integrals in \eqref{eq:dlpderiv} are desired, then additional
steps are required.
The simplest scheme involves subtracting the relevant quantity from the QBX-derived
one-sided limit. This retains the expected order of accuracy.
A second option is to compute {\em both} one-sided limits using QBX and average the
quantities appropriately. That is, one can compute
\begin{equation}
D\mu(x) = \frac 12 \left( \lim_{x^+ \rightarrow x}  D\mu(x^+)
+  \lim_{x^- \rightarrow x}  D\mu(x^-) \right)
\label{eq:dlp-twosided}
\end{equation}
by two applications of QBX.

There are two drawbacks to the latter approach and one advantage.
First, it makes the scheme approximately twice as expensive as using a one-sided limit.
Second, the limiting values obtained from the two sides of $\Gamma$ can
vary noticeably in their accuracy for a given choice of smooth rule and expansion
order. As a result, the error in the
principal value computed by averaging is dominated by the worse of
the two limits. The advantage of using the two-sided limit is that
the Nyström approximation of the operator $D\mu(x)$ is much better behaved spectrally.
We discuss this issue in some detail in Section
\ref{sec:qbx-spectra}. When solving integral equations, we believe this advantage
outweighs the other considerations.

\subsection{Informal description of the algorithm}
\label{sec:at-a-glance}
This section provides a complete description of the steps required to
implement QBX. We assume that we are given a smooth curve $\Gamma$ subdivided
into $M$ panels $\Gamma_1,\dots,\Gamma_M$ of arc lengths
$h_1,\dots,h_M$, respectively and that $\hat n$ denotes the outward normal to
$\Gamma$.

\begin{quote}
\noindent
{\bf Set up parameters}
\begin{enumerate}
  \item
    Fix the desired accuracy $\epsilon$.

  \item Choose local expansion order $p$ (so that $S\sigma$ will be
    computed to the order of accuracy $p+1$).

  \item Choose $q$ and $r$ such that \eqref{eq:underlying-quad-estimate} is
    approximately satisfied to precision $\epsilon$ (assuming the underlying smooth
    rule is composite Gauss-Legendre quadrature).
    For points on panel $m$, a value of $r_m=h_m/2$ works well in practice.
    (See Section \ref{sec:analysis} for details.)
\end{enumerate}

\noindent
{\bf Compute one-sided limit}
\begin{enumerate}
  \setcounter{enumi}{3} 
  \item
    For each target point $x_j\in \Gamma_m \subset \Gamma$:
    \begin{enumerate}
      \item Fix the expansion center
        $c_j:= x_j \mp (h_m/2) \hat n,$ with $(-)$ corresponding to
        seeking the interior limit and $(+)$ corresponding to seeking
        the exterior limit.

        \smallskip
        \algcmt{%
          If $c_j$ is too close to any other panel $n\ne m$, refine
          the quadrature (``source") grid (thereby shrinking $h_m$ and
          moving $c_j$ closer to $\Gamma$) until this is no longer the case.%
        }
      \item Compute the expansion coefficients. For example, for the single layer potential,
        \[
          \alpha_{j,l}  :=
          \frac{i}{4} \,
          \int_\Gamma H^{(1)}_l(k|x'-c_j|) e^{i l \theta'} \sigma(x') \intd x'
        \]
        for $l=-p,\dots,p$ using the underlying $q$th order accurate rule.
        (See Fig. \ref{fig:graf-addition} for the definitions of
        $\theta,\theta'$.)
      \item Evaluate the local expansion at $c_j$:
        \[
          u_j:=
          \sum_{l=-p}^p \alpha_{j,l} J_l(k |x-c_j|) e^{-i l \theta}.
        \]
    \end{enumerate}
\item If the desired integral is a principal value or finite-part integral that
    has a jump condition, use the appropriate expression from Section
    \ref{sec:derivatives} to subtract the appropriate term from the one-sided
    limit (or repeat the calculation with a center on the opposite side
    and average the two sided limits as in
    \eqref{eq:dlp-twosided}).
\end{enumerate}
\end{quote}

A few observations are in order:

\begin{itemize}
\item In practice, the error from QBX is greater when the expansion center for
a target point $x$ lies on the concave side of the curve rather than the convex side.
(See Fig. \ref{fig:strip-of-death-coarse} for an illustration and
\citep{barnett_evaluation_2012} for analytic insight.)
\item
The algorithm contains a few nested loops, allowing for algorithmic
variation. One can save storage,
for example, by avoiding the allocation of memory to
the expansion coefficients $\alpha_{j,l}$. Each ``source" point can compute its
contribution to $u_j$ directly.
\item
The above algorithm directly \emph{applies} the layer
potential operator to a given density $\sigma(x')$. It is straightforward
to modify the algorithm to compute and store
all (or near neighbor) quadrature weights in a table, hence
constructing the Nyström matrix approximating the integral operator
(see Section \ref{sec:qbx-int-eq}).
\end{itemize}
\subsection{Remarks on grids}
\label{sec:grids}
The QBX procedure does not require tight coupling between the
``source" and ``target" grids. By ``source" grid, we mean the set of
points on $\Gamma$ where the density and Green's function are sampled
in computing the local expansion coefficients using the underlying
smooth quadrature rule. By ``target" grid, we mean the set of points
along $\Gamma$ where we seek the value of the layer potential.
\subsubsection{Adaptive boundary grids}
\label{sec:non-uniform-h}
\begin{figure}
  \centering
  \begin{tikzpicture}[scale=1.6]
    \coordinate (a) at (-3,0.5) ;
    \coordinate (b) at (1.5,0) ;
    \coordinate (c) at (2.75,0.55) ;
    \coordinate (d) at (2.9,1) ;
    \def\mycurve{
      \draw [|-|,thick]
        (a) ..controls +(20:1.5cm) and +(180:1.5cm) .. (b)
        \marklegendre{a} ;

      \draw [-|,thick]
        (b) ..controls +(0:4mm) and +(230:4mm) ..  (c)
        \marklegendre{b} ;

      \draw [-|,thick]
        (c) ..controls +(230+180:2mm) and +(280:2mm) ..  (d)
        \marklegendre{c} ;
    }
    \mycurve

    \foreach \intv/\intvh in {a/3,b/1.5,c/0.45}
    {
      \foreach \me/\other/\sign in
      {
        \intv 0/\intv1/1,
        \intv 1/\intv2/1,
        \intv 2/\intv3/1,
        \intv 3/\intv4/1,
        \intv 4/\intv5/1,
        \intv 5/\intv6/1,
        \intv 6/\intv5/-1
      }
      {
        \path
          let
            \p1 = ($ (\me) - (\other) $),
            \n2 = {veclen(\x1, \y1)},
            \p3 = (\y1*\sign/\n2*12*\intvh, -\x1*\sign/\n2*12*\intvh)
          in
            ($(\me) + (\p3)$) coordinate (c\me) ;
      }
    }

    \foreach \p/\q/\col in {
      a4/a5/black!50,
      a5/a6/black!40,
      a6/b0/black!30,
      b0/b1/black!30,
      b1/b2/black!40,
      b2/b3/black!50
    }
    {
      \fill [\col] (cb0) -- (\p) -- (\q) -- cycle;
    }
    \mycurve

    \foreach \intv in {a,b,c}
      \foreach \i in {0,...,6}
        \fill (\intv\i) circle (0.75pt);

    \foreach \intv in {a,b,c}
      \foreach \i in {0,...,6}
      {
        \draw (\intv\i) -- (c\intv\i) ;
        \fill (c\intv\i) circle (0.75pt);
      }

    \node [above=1.5cm of cb3] (centerlabel) {Expansion centers};
    \foreach \ctr in {ca3,ca5,cb3,cc3}
    {
        \draw [thin,->,shorten >=1mm] (centerlabel) -- (\ctr);
    }

    \node [above=5mm of cb1,xshift=2mm] (clabel) {$c$};
    \draw [->,shorten >=1mm] (clabel) -- (cb0);

    \node at (a1) [yshift=-5mm] {$\Gamma$} ;
    \node at (a3) [yshift=-3mm] {$I_1$} ;
    \node at (b4) [xshift=2mm,yshift=-3mm] {$I_2$} ;
    \node at (c4) [xshift=4mm,yshift=-2mm] {$I_3$} ;

  \end{tikzpicture}
  \caption{
    In some settings, one encounters source grids that are highly adaptive,
    with sudden changes in mesh spacing. This requires some control in QBX
    to avoid errors in the local expansion approximation
    (\ref{qbxquad-error}).
    See Section \ref{sec:non-uniform-h} for discussion.
  }
  \label{fig:unevenly-refined-panels}
\end{figure}
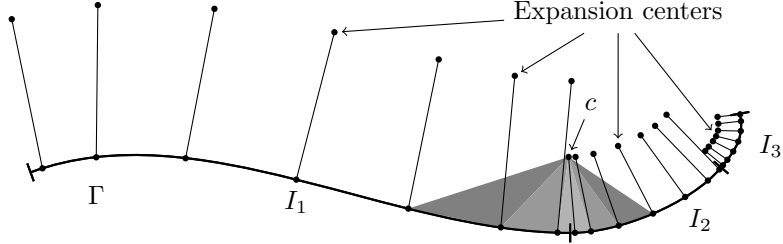

In our discussion thus far,
we have implicitly assumed the subintervals used to divide the boundary
$\Gamma$ are all of approximately the same length.
Many applications, of course, are best addressed using some form
of adaptive mesh refinement along the curve to resolve either complicated
data or to discretize an unknown but complicated density.

The main issue for the application of QBX  with such grids concerns
the location of the expansion centers and the validity of the error estimate
in \eqref{eq:underlying-quad-estimate}.
Fig. \ref{fig:unevenly-refined-panels}
illustrates the issue, under the assumption that centers are chosen
using the rule $r\approx h_1/2, h_2/2, h_3/2$ on three successive panels
$I_1,I_2,I_3$. Consider, now,
the expansion center indicated by $c$, which is at a distance
$r = h_2/2$ from $\Gamma$. The
filled triangles in the figure illustrate the angles spanned by adjacent
source quadrature nodes. The resolution
provided in evaluating expansion coefficients at $c$ by the sources on the
panel $I_1$ is clearly much lower than that provided by the sources on $I_2$ or $I_3$.
Moreover, the assumption that
$r > h_1/4$, which is  essential
in \eqref{eq:underlying-quad-estimate} in order
for the error to be small, is violated.

Fortunately, this problem is straightforward to address: one simply requires
sufficient sampling on $I_1$ for the error estimate to guarantee high precision.
There are several possible strategies in terms of implementation, and we list
two here.
\begin{itemize}
  \item If adjacent panels have substantially different lengths,
    interpolate the source density on the larger one to a fine grid that matches
    the resolution of the smaller one on the fly.
  \item In discretizing the boundary, require that no two adjacent panels differ
    in length by more than a factor of two and increase the number of points $q$
    per panel by a factor of 2.
\end{itemize}
We use the second (simpler) strategy for the data presented in Section
\ref{sec:numerical}.

\subsubsection{Grids for solving integral equations}
\label{sec:qbx-int-eq}
When solving integral equations with a Nyström method,
a common grid is used for both sampling the unknown density and
evaluating the resulting layer potential.
This coincides with what we have referred to as the target grid,
which should resolve the curve and the density to the desired
precision. This grid may not be sufficiently fine to
satisfy the requirements
\eqref{eq:underlying-quad-estimate} and \eqref{eq:x-c-closest}.
Under those conditions, in the QBX procedure, one simply needs to interpolate
the density to a finer grid, which becomes
what we have referred to as the source grid.
\subsection{Spectral structure of operators approximated by QBX}
\label{sec:qbx-spectra}
In this section, we consider the spectral structure of the
QBX-discretized layer potential operators. In addition to being of
mathematical interest, the spectral structure also plays an important
role in determining the
performance of iterative methods such as GMRES \citep{saad_gmres_1986}
when used to solve integral equations.

We concentrate here on the double layer potential $D\mu(x)$ which
plays an important role, for example, in
solving the Dirichlet problem for the Helmholtz equation
(at a non-resonant frequency $k$) in the interior $\Omega^-$ of $\Gamma$. Given
Dirichlet data $f(x)$, representing $\phi$ as a double layer potential
\[ \phi(x_0) = D\mu(x_0)  \]
for $x_0 \in \Omega^-$, and using the jump relation
\eqref{eq:dlpjump}, we obtain the equation
\begin{equation}
(-\frac{1}{2} + D ) \mu(x) = f(x)
\label{dir:skie}
\end{equation}
for $x \in \Gamma$.

On smooth boundaries, $D$ is continuous and hence compact,
with a discrete, bounded spectrum that has a unique accumulation point at zero
\citep{colton_inverse_1998}.
In other words, it is a smoothing operator that damps out the
high frequency modes in the density $\mu$.
We will show below that QBX is able to preserve all of these properties,
most critically the spectral clustering at zero.

We assume we have a grid on
$\Gamma$ with $M$ panels and $q$ points per panel, and that we consider
a density that lives in the space of piecewise $(q-1)$th order polynomials
over the $M$ panels.  We assume the double layer potential is computed
using QBX, with values output on the same grid, corresponding to a
discrete $Mq \times Mq$ matrix.

Now, because the truncated
$p$-term expansion represents locally smooth functions obeying a band
limit related to $p$, high-frequency components of the density
are either attenuated or aliased to lower frequencies as they
transition from the source grid to the expansion. Empirically,
attenuation is the dominant effect.

This behavior has several consequences. A beneficial
feature is that QBX responds very benignly to
potentially erroneous high-frequency data that may be present in the
discretized densities or geometries.  Also, some spectral features are
reproduced with no further effort.  For example, when applied to the
single layer potential, the QBX-based one-sided limit faithfully
reproduces the spectrum of the continuous operator, accumulating at
zero.

Unfortunately, when computing the \emph{double-layer potential} $D$
using QBX based on the one-sided limit as
\[
  D_{h,\text{one-sided}}\;\mu(x) = \lim_{x^\pm \rightarrow x}  D_h\mu(x^\pm) \mp \frac{1}{2} \mu(x) \, ,
\]
high frequency components are attenuated in
$\lim_{x^\pm \rightarrow x}  D_h\mu(x^\pm)$ but not, of course, in $\frac{1}{2} \mu(x)$.
As a result, the spectrum of $D_{h,\text{one-sided}}$ does {\em not} accumulate at zero.

If one then solves the integral equation \eqref{dir:skie} iteratively, with
$D_{h,\text{one-sided}}$ computed in this manner,
then an iterative method will converge rapidly
\emph{up to} the level of discretization error, at which point it will stall.
Since one does not know {\em a priori} exactly what the discretization error
will be, this is rather inconvenient.
Fortunately, computing $D$ using the
two-sided averaging approach \eqref{eq:dlp-twosided} as discussed
in Section \ref{sec:derivatives} yields a discrete
operator with a spectrum accumulating at zero, because both
limits are filtered. Matching this feature of the continuous operator
allows
iterative linear solvers converge to rapidly to solutions having residuals
near machine precision even \emph{beyond} the level of discretization error.
Using two-sided averaging as in \eqref{eq:dlp-twosided} may not be the
only way to achieve this spectral behavior, but it is particularly
convenient.

The preceding discussion applies only to the compact case.
For hypersingular (finite-part) integrals or Hilbert-Riesz type operators
such as $R$ in (\ref{eq:riesz-def}), there is no particular advantage in using the
two-sided limit.
Also, as discussed above, if jump conditions are not invoked, operators such as
the (compact) single-layer potential can be
represented faithfully by the one-sided procedure without difficulty.
In the numerical results shown in Section \ref{sec:numerical},
we have used two-sided averaging for all operators to which it
applies.
A more detailed discussion of the spectral properties of integral
operators computed using QBX will be reported at a later date.
\section{Numerical experiments}
\label{sec:numerical}
In this section, we illustrate the performance of QBX.
We begin by describing some simple test geometries.
We then present results for a variety
of layer potentials,
showing that high accuracy can be achieved even with
modest-sized discretizations.
Finally, we investigate the performance of QBX when used as part of an
integral equation solver for a variety of Dirichlet and Neumann
boundary value problems at various orders of accuracy.

For the sake of convenience, we will denote the normal
derivatives of the single and double layer potentials by
\begin{align*}
  S'\sigma(x) &:= \hat n(x) \cdot \nabla_x S\sigma (x),\\
  D'\sigma(x) &:= \hat n(x) \cdot \nabla_x D\sigma (x).
\end{align*}
When $x \in \Gamma$, the first is meant in the principal value sense
and the second in the Hadamard finite-part sense.
\subsection{Four test geometries}
\label{sec:test-geometries}
\begin{figure}
  \hfill
  \subfigure[
    A circle, decomposed into 50 panels, given by
    \eqref{eq:geo-ellipse} with $\alpha=1$.
  ]{
    \label{fig:geo-ellipse1}
    \includegraphics[width=0.4\textwidth]{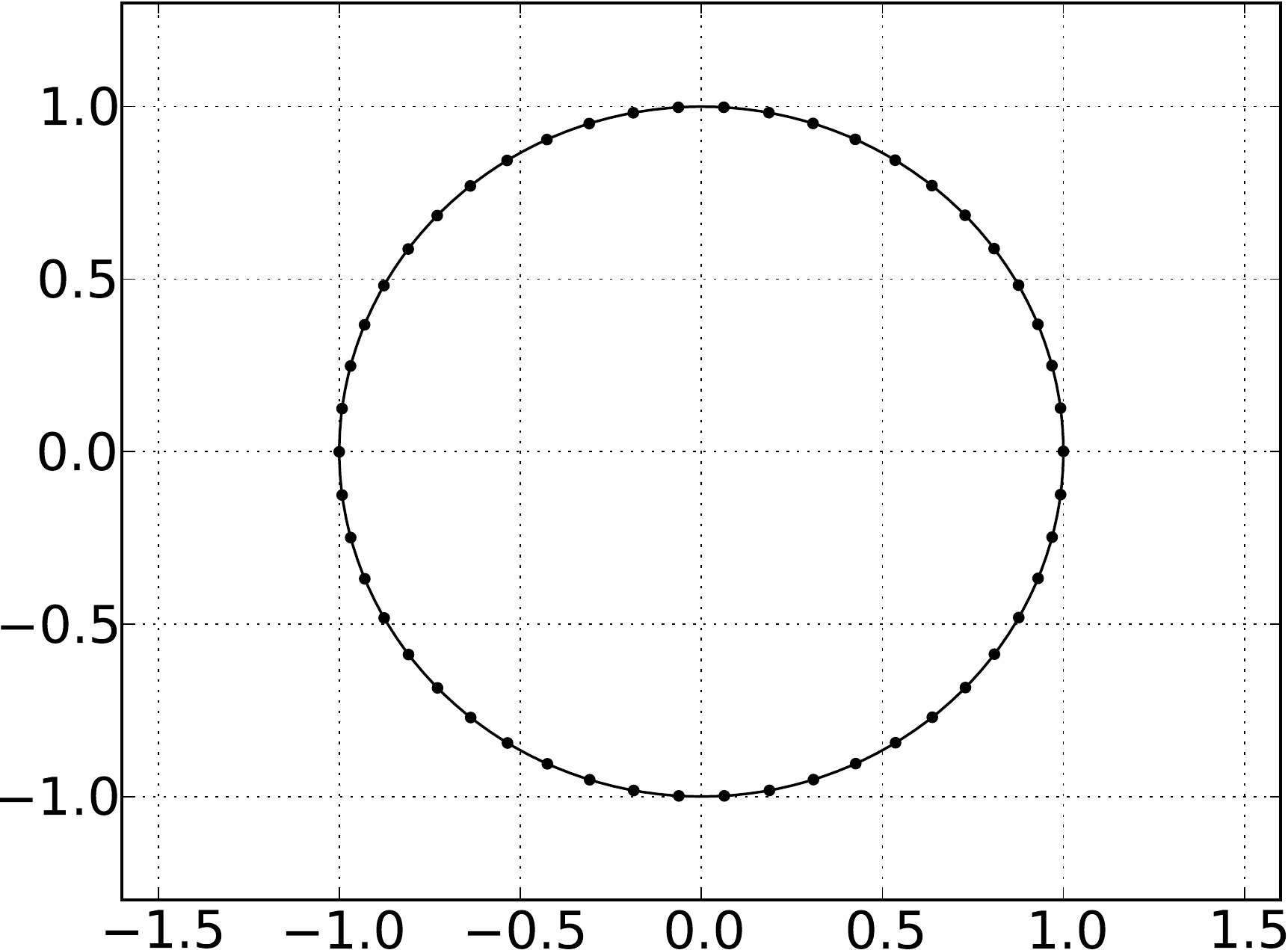}
  }
  \hfill
  \subfigure[
    An ellipse of aspect ratio 3:1 decomposed into 50 panels, given by
    \eqref{eq:geo-ellipse} with $\alpha=3$.
  ]{
    \label{fig:geo-ellipse3}
    \includegraphics[width=0.4\textwidth]{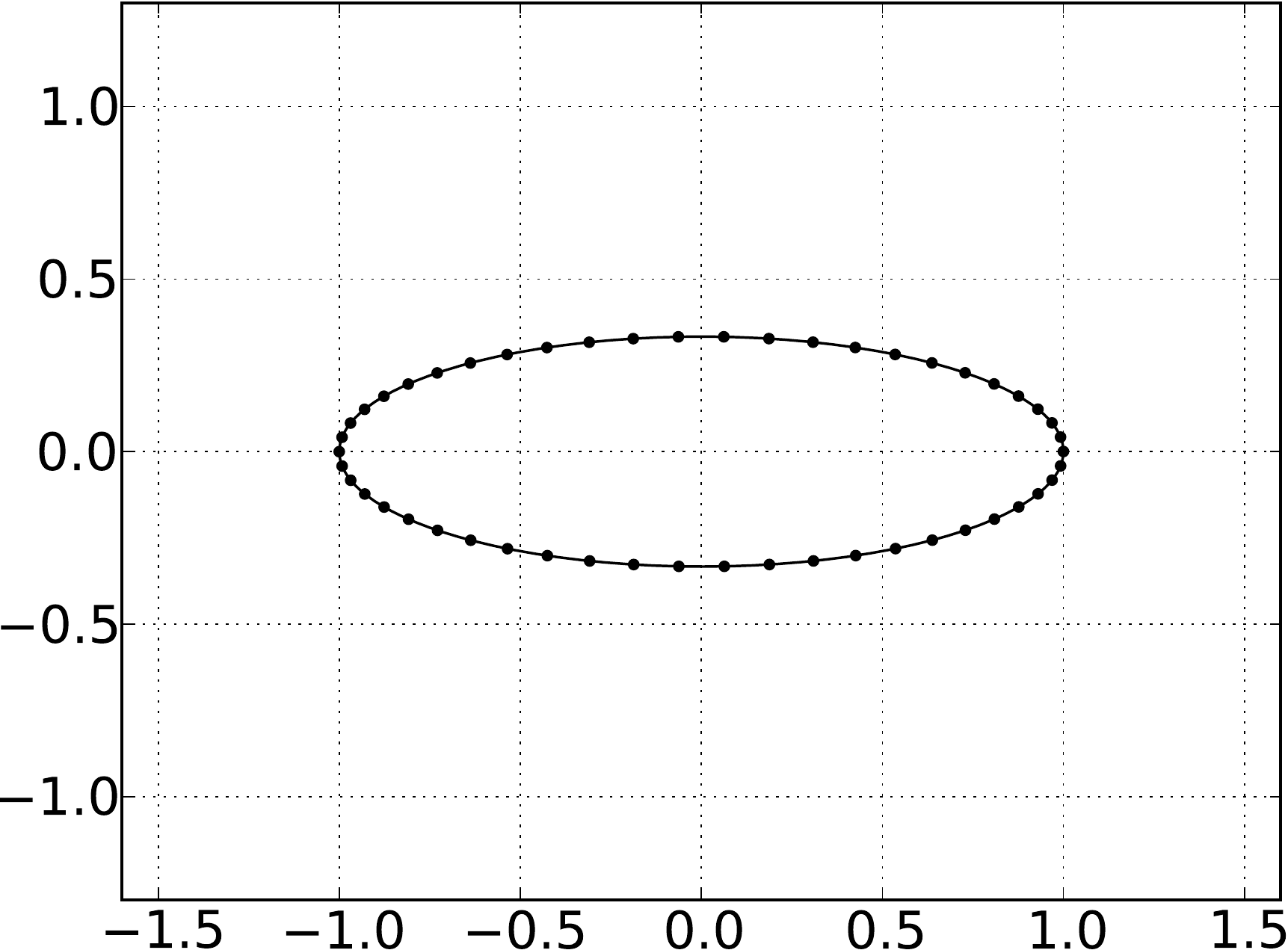}
  }
  \hfill

  \hfill
  \subfigure[An ellipse of aspect ratio 6:1 decomposed into 58 panels,
    given by \eqref{eq:geo-ellipse} with $\alpha=6$.
  ]{
    \label{fig:geo-ellipse6}
    \includegraphics[width=0.4\textwidth]{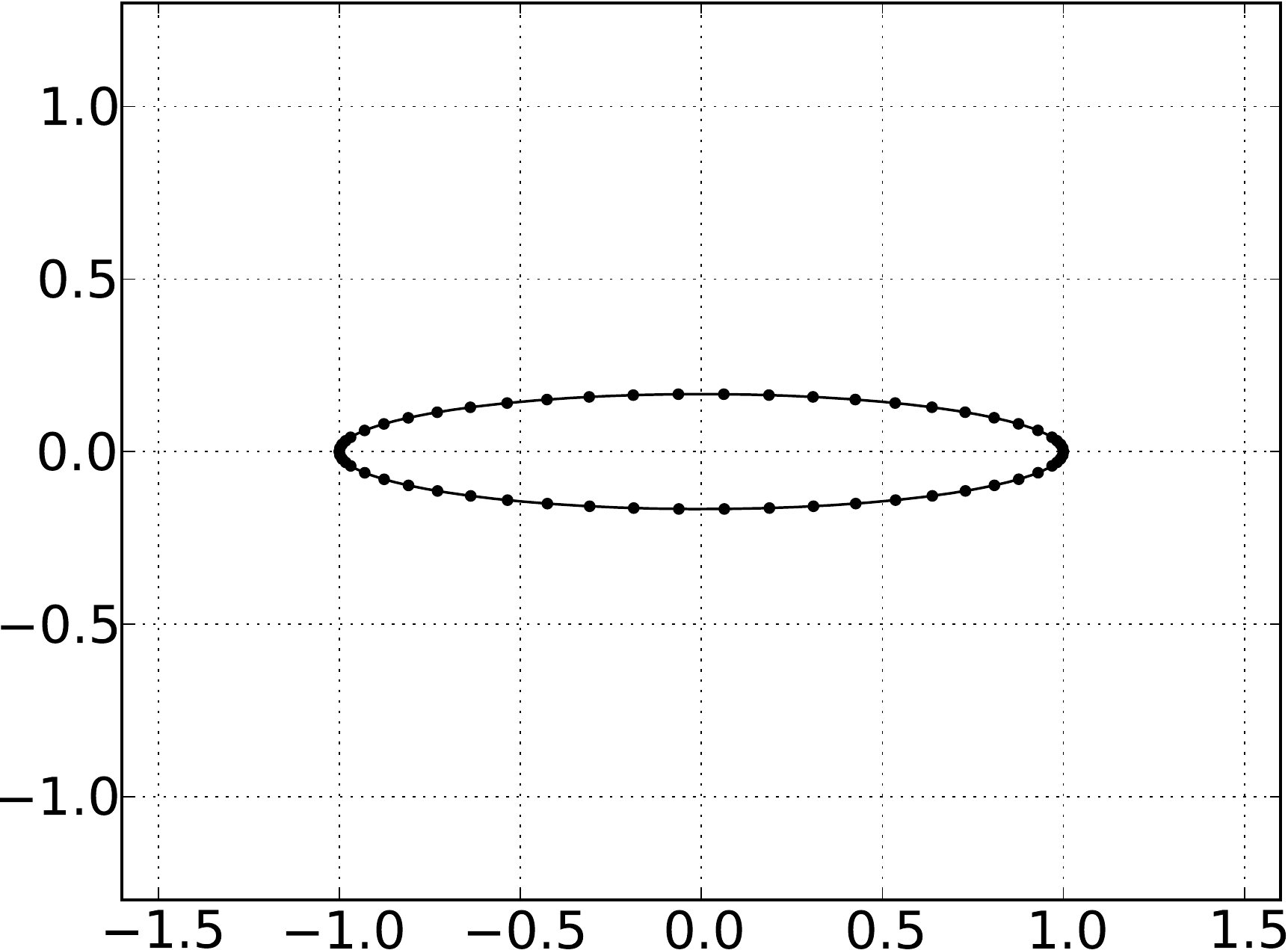}
  }
  \hfill
  \subfigure[
    A starfish-shaped curve decomposed into 80 panels,
    given by \eqref{eq:geo-starfish}.
  ]
  {
    \label{fig:geo-starfish}
    \includegraphics[width=0.4\textwidth]{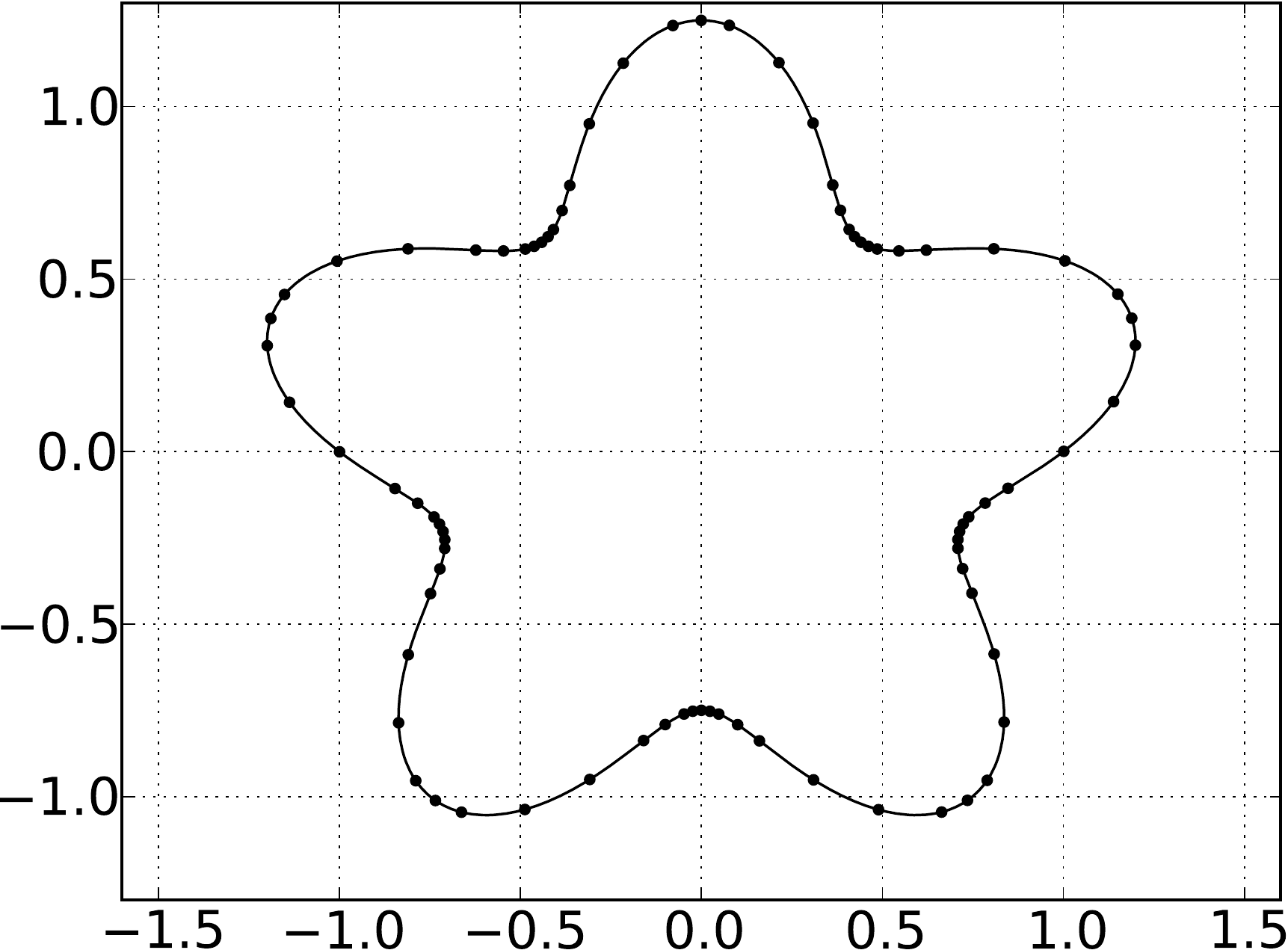}
  }
  \hfill

  \caption{Test geometries and their panel subdivisions.}
  \label{fig:test-geometry}
\end{figure}

The four curves that we will use for
our numerical tests are shown in Fig.
\ref{fig:test-geometry}.
The ellipses of Figs.
\ref{fig:geo-ellipse1}, \ref{fig:geo-ellipse3}, \ref{fig:geo-ellipse6}
are given by
\begin{equation}
  \gamma(t)=
  \begin{pmatrix}
    \phantom{\frac1\alpha}\cos(2\pi t) \\
    \frac1\alpha \sin(2\pi t)
  \end{pmatrix}
  \label{eq:geo-ellipse}
\end{equation}
for $\alpha=1$, $3$, and $6$, respectively, and the ``starfish" of Figure
\ref{fig:geo-starfish} is given by
\begin{equation}
  \gamma(t)=
  \left(1+\frac{\sin(5\cdot 2\pi t)}4 \right)
  \begin{pmatrix}
    \cos(2\pi t) \\
    \sin(2\pi t)
  \end{pmatrix}.
  \label{eq:geo-starfish}
\end{equation}
In each of these cases, $t\in[0,1)$.

We decompose the curves $\gamma(t) = (x(t),y(t))$ into panels, with
$x(t)$ and $y(t)$ represented by a 16-term Legendre polynomial
expansion on each panel.  We generate an initial subdivision that is
equispaced in $t$.  To ensure the accuracy of the expansion, we sample
the curve at $64=4\cdot 16$ points per panel and compute Legendre
expansion coefficients by Gauss-Legendre quadrature.

Since we are integrating with respect to the parameter $t$ rather
than arc length, we first determine whether the curve is well-resolved
by studying the spectral decay of the Legendre coefficients of
$|\gamma'(t)|=\sqrt{\gamma_1'(t)^2+\gamma_2'(t)^2}$,
using the method of \citet{kloeckner_viscous_2011}. We then
determine the $L^2$ energy contained in the tail of the series
(i.e. in modes 16 and above in our case). If the estimated
residual exceeds $10^{-11}$, the panel is bisected. A panel is also
bisected if its length (as computed by integrating $|\gamma'|$) is
more than twice that of its neighboring panels, to avoid the issues
described in Section \ref{sec:non-uniform-h}.

A source oversampling factor (see Section \ref{sec:grids}) of
$6$ is used throughout, that is $q=6 \cdot 16$.
A factor of $2$ is included to allow
adjacent panels to differ in length by factors of two, and an additional factor of $3$
is included to ensure that
the second term in the estimate \eqref{eq:underlying-quad-estimate} is
negligible.
More judicious oversampling strategies will be considered at a later date.
\subsection{Layer Potential Evaluation}
\label{sec:twoside-tests}
\begin{table}
  \centering
  \input{ref-digits.cls}

  \label{tab:ref-digits}
\end{table}

Our first set of tests examines the ability of QBX to compute a
range of standard and non-standard layer potential operators to high precision.
We consider the operators $S$,
$\partial_x S$, $\partial_y S$, $\partial_{xx} S$, $\partial_{xy} S$,
$\partial_{yy} S$ (target derivatives of the single-layer potential)
as well as $D$, $\partial_x D$, $\partial_y D$ (target derivatives of
the double-layer potential).  We also consider
the layer potential induced by tangentially
oriented dipoles (a source derivative in the tangential direction), which we denoted
earlier by $R$.
$R$ is the analog for Helmholtz potentials of the Hilbert transform in two dimensions
or the Riesz transform in three dimensions.

We apply each of these operators to the density
$\sigma(t)=\sin(10\pi t)$
and compare the computed result to a
reference solution in the $L^2$ and $L^\infty$ norms.  The Helmholtz
parameter was chosen as $k=0.5$.  The computations
were carried out with local expansion
order $p=16$.  We obtained our reference solution by using adaptive
Gaussian quadrature with tolerance $10^{-12}$ in quadruple precision with target points at
distances $10^{-6}$, $10^{-6}/2$, and $10^{-6}/4$ from the curve along the normal on
either side. We then computed one-sided limits $v^+$ and $v^-$ on each
side by third-order Richardson extrapolation.
We computed the value $(v^++v^-)/2$ as the reference solution
for principal value or finite-part on-surface integrals.
Results are shown in Table \ref{tab:ref-digits}, confirming
that high accuracy is achievable with
modest computational effort, as expected from a rapidly convergent scheme.
We further note that operators involving derivatives with tangential
components to the curve are either hypersingular or bounded (but not
compact).  Since differentiation is ill-conditioned, one should expect
some loss of accuracy with successively higher derivatives.

\subsection{Integral equation solvers}
\label{sec:inteq-solve-tests}
\begin{figure}
  \hfill
  \subfigure[
    Monopole ``point charges'' and observation points
    for the test of the solution of an
    exterior boundary value problem.
  ]{
    \includegraphics[height=5cm]{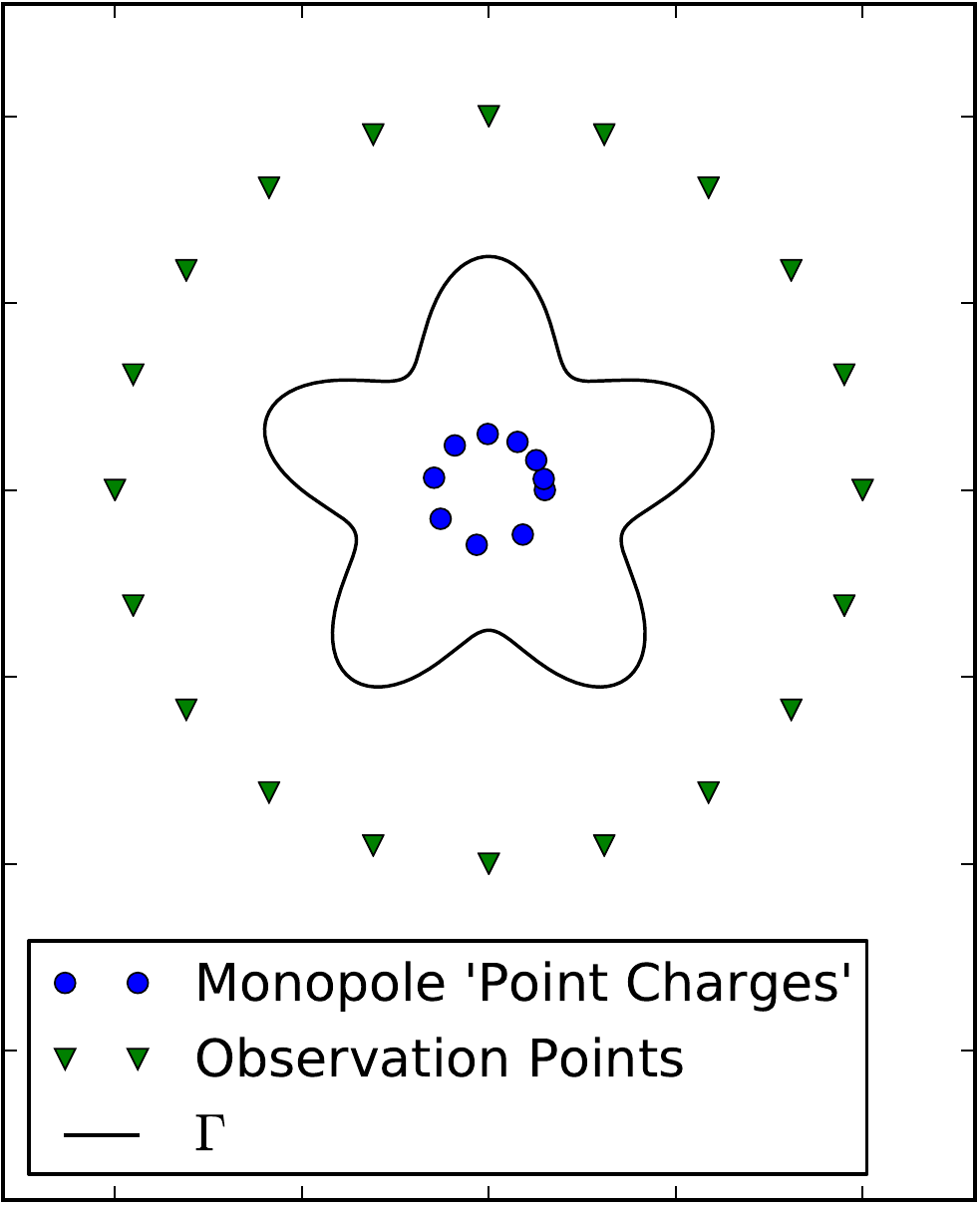}
  }
  \hfill
  \subfigure[
    Monopole ``point charges'' and observation points
    for the test of the solution of an
    interior boundary value problem.
  ]{
    \includegraphics[height=5cm]{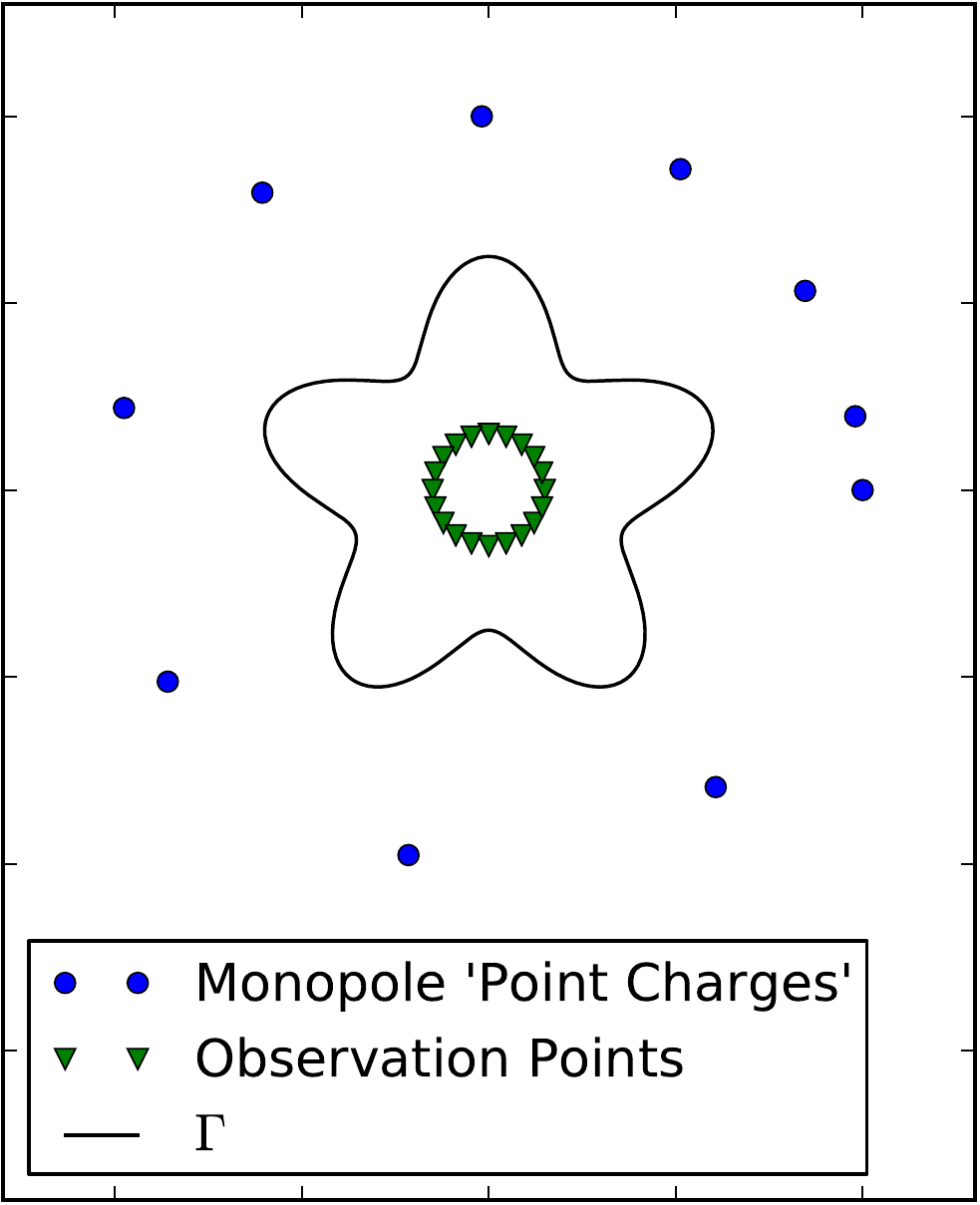}
  }
  \hfill 
  \ \caption{
    Setup of the integral equation test cases, shown with the
    `starfish' geometry of Figure \ref{fig:geo-starfish}.  The field
    induced by a collection of monopole ``point charges"
    in the complement of the computational domain is used to define
    the exact solution and to define the boundary condition for the
    governing partial differential equation. The relevant integral equation
    is then solved using QBX for discretization. Finally, the field is
    evaluated at the indicated observation points using the
    underlying smooth quadrature rule and compared to the reference
    field originating at the ``point charges.''
    (For observation points close to the boundary,
    the predecessor of QBX \citep{barnett_evaluation_2012} should be used.)
  }
  \label{fig:inteq-test-setup}
\end{figure}

\foreach \sym/\name in {
  ellipse1/circle,
  ellipse3/3-to-1 ellipse,
  ellipse6/6-to-1 ellipse,
  starfish/``starfish'' geometry
}
{
  \begin{table}
    \centering
    \input{inteq-\sym.cls}

    \caption{
      Convergence in the $l^2$ norm of the solution evaluated at a set of targets
      after solving a boundary value problem using an integral equation and QBX
      on the \name\ of Figure
      \ref{fig:geo-\sym}.
      GMRES iteration counts are shown in parentheses next to the
      error data.
      ``EOC'' is the empirical order of
      convergence, obtained by a log-least-squares fit of the shown
      $l^2$ errors.
    }
    \label{tab:inteq-results-\sym}
  \end{table}
}

\begin{figure}
  \hfill
  \subfigure[
    Discretization of the singular `teardrop' geometry, using 174
    panels of order 16. The panels near the corner are dyadically
    refined until the smallest one has length $10^{-8}$.
  ]{
    \label{fig:geo-drop}
    \includegraphics[width=0.35\textwidth]{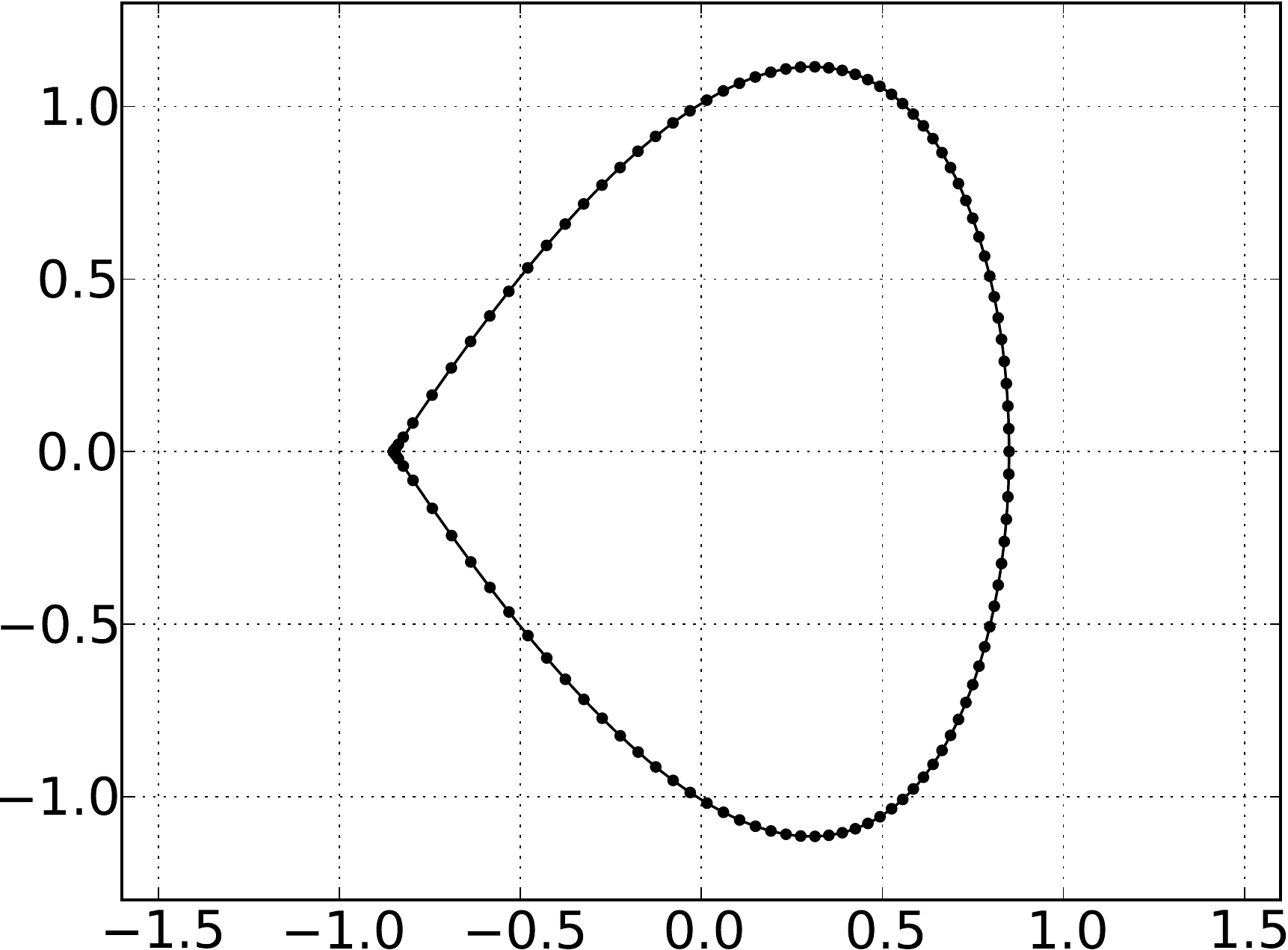}
  }
  \hfill
  \subfigure[
    Convergence in the $l^2$ norm of the solution evaluated at a set of targets
    after solving a boundary value problem using an integral equation and QBX
    on the `teardrop' geometry of Figure
    \ref{fig:geo-drop}.
  ]{\input{inteq-drop.cls}\label{tab:inteq-drop} }
  \hfill

  \caption{
    Integral equation tests on a `teardrop' geometry with a corner.
  }
\end{figure}

Our second and perhaps more important test examines the suitability of
QBX in the context of solving integral equations of the
second kind.
For each interior domain, we define an exact solution as the
field induced by a collection of monopole ``point charges'' in the exterior.
For each exterior domain, an exact solution is constructed using
monopole ``point charges'' in the interior. Given this exact solution, we
compute either Dirichlet or Neumann data and solve the corresponding
boundary value problem using an integral equation.
We then test the accuracy of the solution at a set of observation points.
Figure \ref{fig:inteq-test-setup} illustrates the geometry
of our tests for the interior and exterior case.

For the Dirichlet problem, we use the combined-field representation
$ u=  -D_k\sigma + \alpha S_k \sigma$ \citep{colton_inverse_1998}, which leads to the
second-kind equation
\begin{equation}
  \mp \frac 12 \sigma + \alpha S_k \sigma - D_k \sigma=f,
  \label{eq:dirichlet-inteq}
\end{equation}
for $\sigma$, where $f$ is the Dirichlet data obtained
from the exact solution.
Since the wave number is not large, we choose
$\alpha=i$ throughout this section.
The $(+)$ sign corresponds to the exterior problem
and the $(-)$ sign to the interior problem.
The subscript $k$ in $S_k$ and $D_k$ is used to emphasize that the underlying
Green's function is that for the Helmholtz equation with Helmholtz parameter $k$.

For the Neumann problem, we use a slight variation on the well-known
combined-field representation
\citep{panic_solubility_1965,leis_dirichletschen_1965,brakhage_uber_1965,colton_inverse_1998,bruno_fast_2012}
\[
   u= S_k\sigma  - \alpha D_k S_0 \sigma,
\]
where once again subscripts of $k$ indicate the use of the Helmholtz
kernel with parameter $k$, and a subscript of $0$ indicates the use of
a Laplace kernel.  This representation leads to the second-kind
integral equation
\begin{equation}
  \mp \frac 12 \sigma + S_k' \sigma - \alpha D_k' S_0 \sigma  = f
  \label{eq:neumann-inteq}
\end{equation}
for $\sigma$, where $f$ is the Neumann data obtained from the
exact solution.
Since QBX can integrate hypersingular kernels, we use \eqref{eq:neumann-inteq}
directly.
One may also use the Calder\'on projection identity
\begin{equation}
 D_0'S_0=-I/4+S_0'S_0'
 \label{eq:calderon}
\end{equation}
\cite{nedelec_acoustic_2001} and some algebra to avoid hypersingular operators.

Given our boundary discretization, we assume the unknowns are point values of
$\sigma$ at the source nodes and enforce the integral equation at the same nodes,
corresponding to a Nyström method. We use GMRES to solve
\eqref{eq:dirichlet-inteq} or \eqref{eq:neumann-inteq} iteratively and QBX to carry
out the matrix-vector products. We set the GMRES tolerance to $10^{-14}$ independent
of the order of accuracy of the QBX-based quadrature.

Following the work of \citet{bremer_nystrom_2011}, we use as unknowns the
density values multiplied by the square root of the corresponding quadrature weight.
This has the effect that the discrete $l^2$ inner product approximates the
continuous $L^2$ inner product and results in much improved conditioning,
especially in the presence of widely varying panel sizes. (This is critical
in geometries with corners, as discussed in the next section.)

After solving for $\sigma$ in \eqref{eq:dirichlet-inteq} or \eqref{eq:neumann-inteq},
we use the corresponding representation to evaluate the potential $u$
at a number of target points in $\Omega$. We then compare those values to
the exact solution and compute the relative
error.

Results for the geometries described in Section
\ref{sec:test-geometries} are shown in Tables
\ref{tab:inteq-results-ellipse1}, \ref{tab:inteq-results-ellipse3},
\ref{tab:inteq-results-ellipse6}, and
\ref{tab:inteq-results-starfish}. We observe that, while slightly more
erratic, the results for the Neumann operator
exhibit the loss of one order of accuracy, as expected since we have used QBX
for a hypersingular kernel.

Of particular note is the fact that, as
predicted in Section \ref{sec:qbx-spectra}, GMRES iteration
reaches a residual of $10^{-14}$ with a modest number of
iterations even for low order accurate discretization, in nearly all cases.
This makes QBX-based solvers particularly robust.
\subsubsection{Non-smooth geometries}
\label{sec:corners}
In the derivation of QBX, we have assumed that the layer potential is locally
smooth, so that an expansion in Bessel functions is rapidly convergent.
Since many engineering problems involve geometries with corners (and therefore
potentially non-smooth densities and unbounded layer potentials), it is of interest to study whether
QBX can be used effectively for such problems as well.

Without knowing the precise singularity in the density, it is
shown in \cite{bremer_nystrom_2011,helsing_corner_2008} that high-order polynomial approximation
combined with high-order quadrature on
a dyadically refined mesh yields high-order accuracy. Thus, the only question
is whether QBX can evaluate layer potentials on such structures without excessive
work.
To this end, we consider a `teardrop' shape with a single corner, described by the
parametrization
\[
  \gamma(t)=1.7\begin{pmatrix}
    \sin(\pi t)-0.5\\
    \frac12 \cos(\pi t)(\pi t-\pi)\pi t
  \end{pmatrix}.
\]
The curve and its discretization are shown in Figure \ref{fig:geo-drop}.
We have dyadically refined the boundary toward the corner until the smallest
panel lengths are less than $10^{-8}$ on each side.
Carrying out the same type of experiment as in the preceding section, we obtain
the results in Table \ref{tab:inteq-drop}. The apparent drop in
convergence order for $p=5$ can be attributed to the error made in
halting dyadic subdivision at $\epsilon=10^{-8}$.
These experiments
demonstrate that there are no significant obstacles to using QBX in this context.
Geometric panel refinements near corners or singularities mean that
QBX is still always evaluating a field locally smooth on the panel
scale.
\section{Generalizations and implementation issues}
\label{sec:details}
Our goal in this paper has been to present a new approach to quadrature, which we refer to as QBX
(`quadrature by expansion'). While we have largely limited
our attention to the Helmholtz equation in two dimensions, it should be
clear that the overall approach is independent of dimension
as well as the precise nature of the governing Green's function.
In fact, the QBX approach is far \emph{more} general, extending to kernels that
are not directly connected to a partial differential equation.
These extensions are discussed in \citep{kloeckner_general_2012}.
The method gives rise to
a number of important and interesting questions regarding efficiency,
robustness, and automatic adaptivity.

As for implementation, the main issue we have ignored here is
computational cost. As presented in Section \ref{sec:at-a-glance}, the
asymptotic complexity of QBX is $O(NN_t)$, where $N$ is the number of
source points, $N_t$ is the number of target points. Neglecting
numerous opportunities for optimization, a straight implementation of
the algorithm of Section \ref{sec:at-a-glance} can apply a
single-layer operator at order $p=5$ to a density on 7680 source nodes
with 1280 targets in 0.6 seconds using 16 cores of a 2.93 GHz Intel Nehalem machine. Fortunately,
QBX can be accelerated using the fast multipole method (FMM) or any
other hierarchical fast algorithm
\citep{greengard_fast_1987,cheng_remarks_2006}. The cost is then $O(N
\log N + N_t \log N_t)$.  This coupling, further cost savings, as well as extensions to
three dimensions, are discussed in \citep{qbx_acceleration_2012}.  To
give an indication of the achievable speedups, preliminary
implementations show that the cost of an FMM-based QBX scheme for a
layer potential is two to three times that for a point-to-point FMM
procedure. In particular, layer potentials with tens of thousands of
discretization points are computed in seconds on a single CPU core.  A
variety of other optimizations are also possible: using direct
evaluation for distant panel interactions and QBX for near neighbors
only, adaptive oversampling to ensure accuracy of the local expansion
coefficients with highly adaptive and irregular panel sizes, using the
sample local expansion for several nearby target points, etc.

\section{Conclusions}
\label{sec:conclusions}
QBX permits the rapid, high-order accurate
evaluation of layer potentials in a manner that
is remarkably easy to implement. It is based
on the fact that the induced potential is smooth in the exterior or interior
domain.
The scheme is equipped with a
complete convergence theory. With minor modifications, QBX can
evaluate layer potentials at off-surface points arbitrarily close to the boundary. Since
it is an extension of the scheme developed in \citep{barnett_evaluation_2012} for
precisely that purpose, this is not a surprise.
QBX presents an opportunity
to develop a robust set of software tools for evaluating integral operators with
singular or weakly singular kernels, with
application to a broad range of large-scale simulations in physics and engineering.


\section*{Acknowledgments}

The authors would like to thank Z.~Gimbutas, C.~Epstein, J.-Y.~Lee,
S.~Jiang, S.~Veerapaneni, M.~Tygert, and T.~Warburton for fruitful
discussions.  AK would also like to acknowledge the use of computing
resources supplied by T.~Warburton. The authors' work was supported
through the AFOSR/NSSEFF Program Award FA9550-10-1-0180, by NSF grant
DMS-0811005, and by the Department of Energy under contract
DEFG0288ER25053.

\bibliography{qbx}

\begin{thebibliography}{51}
\providecommand{\natexlab}[1]{#1}
\providecommand{\url}[1]{\texttt{#1}}
\expandafter\ifx\csname urlstyle\endcsname\relax
  \providecommand{\doi}[1]{doi: #1}\else
  \providecommand{\doi}{doi: \begingroup \urlstyle{rm}\Url}\fi

\bibitem[Alpert(1999)]{alpert_hybrid_1999}
B.~K. Alpert.
\newblock Hybrid {Gauss-Trapezoidal} quadrature rules.
\newblock \emph{{SIAM} Journal on Scientific Computing}, 20\penalty0
  (5):\penalty0 1551--1584, Jan. 1999.
\newblock \doi{10.1137/S1064827597325141}.

\bibitem[Atkinson(1997)]{atkinson_1997}
K.~E. Atkinson.
\newblock \emph{{The Numerical Solution of Integral Equations of the Second
  Kind}}.
\newblock Cambridge University Press, Cambridge, UK, 1997.

\bibitem[Barnett(2013)]{barnett_evaluation_2012}
A.~Barnett.
\newblock {Evaluation of layer potentials close to the boundary for Laplace and
  Helmholtz problems on analytic planar domains}.
\newblock Technical report, Dartmouth College, 2013.
\newblock URL \url{http://www.math.dartmouth.edu/~ahb/papers/ce.pdf}.
\newblock submitted.

\bibitem[Beale and Lai(2001)]{beale_lai_2001}
J.~T. Beale and M.-C. Lai.
\newblock {A method for computing nearly singular integrals}.
\newblock \emph{{SIAM} Journal on Scientific Computing}, 38\penalty0
  (6):\penalty0 1902--1925, 2001.
\newblock \doi{10.1137/S0036142999362845}.

\bibitem[Brakhage and Werner(1965)]{brakhage_uber_1965}
H.~Brakhage and P.~Werner.
\newblock {Über das Dirichletsche Außenraumproblem für die Helmholtzsche
  Schwingungsgleichung}.
\newblock \emph{Archiv der Mathematik}, 16\penalty0 (1):\penalty0 325--329,
  1965.
\newblock \doi{10.1007/BF01220037}.

\bibitem[Brebbia et~al.(1984)Brebbia, Telles, and Wrobel]{brebbia_1984}
C.~A. Brebbia, J.~C.~F. Telles, and L.~C. Wrobel.
\newblock \emph{{Boundary Element Techniques}}.
\newblock Springer, New York, 1984.

\bibitem[Bremer(2012)]{bremer_nystrom_2011}
J.~Bremer.
\newblock {On the Nyström discretization of integral equations on planar
  curves with corners}.
\newblock \emph{Applied and Computational Harmonic Analysis}, 32\penalty0
  (1):\penalty0 45 -- 64, 2012.
\newblock \doi{10.1016/j.acha.2011.03.002}.

\bibitem[Bremer et~al.(2010)Bremer, Gimbutas, and
  Rokhlin]{bremer_nonlinear_2010}
J.~Bremer, Z.~Gimbutas, and V.~Rokhlin.
\newblock {A nonlinear optimization procedure for generalized Gaussian
  quadratures}.
\newblock \emph{{SIAM} Journal on Scientific Computing}, 32:\penalty0
  1761--1788, 2010.
\newblock \doi{10.1137/080737046}.

\bibitem[Bruno and Kunyansky(2001)]{bruno_2001}
O.~P. Bruno and L.~A. Kunyansky.
\newblock {A fast, high-order algorithm for the solution of surface scattering
  problems: basic implementation, tests, and applications}.
\newblock \emph{Journal of Computational Physics}, 169:\penalty0 80--110, 2001.
\newblock \doi{10.1006/jcph.2001.6714}.

\bibitem[Bruno et~al.(2012)Bruno, Elling, and Turc]{bruno_fast_2012}
O.~P. Bruno, T.~Elling, and C.~Turc.
\newblock Fast high-order algorithms and well-conditioned integral equations
  for high-frequency sound-hard scattering problems.
\newblock \emph{Int. Journal Numerical Methods in Engineering}, 2012.
\newblock URL \url{http://filer.case.edu/cct21/Scattering_Neumann_BET.pdf}.
\newblock To appear.

\bibitem[Chapko et~al.(2000)Chapko, Kress, and Mönch]{chapko_numerical_2000}
R.~Chapko, R.~Kress, and L.~Mönch.
\newblock On the numerical solution of a hypersingular integral equation for
  elastic scattering from a planar crack.
\newblock \emph{{IMA} J Numer Anal}, 20\penalty0 (4):\penalty0 601--619, Oct.
  2000.
\newblock \doi{10.1093/imanum/20.4.601}.

\bibitem[Cheng et~al.(2006)Cheng, Crutchfield, Gimbutas, Greengard, Huang,
  Rokhlin, Yarvin, and Zhao]{cheng_remarks_2006}
H.~Cheng, W.~Crutchfield, Z.~Gimbutas, L.~Greengard, J.~Huang, V.~Rokhlin,
  N.~Yarvin, and J.~Zhao.
\newblock {Remarks on the implementation of the wideband {FMM} for the
  Helmholtz equation in two dimensions}.
\newblock \emph{Contemporary Mathematics}, 408:\penalty0 99, 2006.
\newblock \doi{10.1090/conm/408}.

\bibitem[Colton and Kress(1998)]{colton_inverse_1998}
D.~Colton and R.~Kress.
\newblock \emph{Inverse Acoustic and Electromagnetic Scattering Theory}.
\newblock Springer, 2nd edition, Jan. 1998.
\newblock ISBN {354062838X}.

\bibitem[Davis and Rabinowitz(1984)]{davis_1984}
P.~J. Davis and P.~Rabinowitz.
\newblock \emph{{Methods of Numerical Integration}}.
\newblock Academic Press, San Diego, 1984.

\bibitem[Delves and Lyness(1967)]{delves_numerical_1967}
L.~M. Delves and J.~N. Lyness.
\newblock A numerical method for locating the zeros of an analytic function.
\newblock \emph{Math. Comp.}, 21\penalty0 (100):\penalty0 543--560, Oct. 1967.
\newblock \doi{10.2307/2004999}.

\bibitem[Duffy(1982)]{duffy_1982}
M.~G. Duffy.
\newblock {Quadrature over a pyramid or cube of integrands with a singularity
  at a vertex}.
\newblock \emph{SIAM Journal on Numerical Analysis}, 19\penalty0 (6):\penalty0
  1260--1262, 1982.
\newblock \doi{10.1137/0719090}.

\bibitem[Epstein et~al.(2013)Epstein, Greengard, and
  Klöckner]{epstein_convergence_2012}
C.~Epstein, L.~Greengard, and A.~Klöckner.
\newblock {On the convergence of local expansions of layer potentials}.
\newblock Technical report, Courant Institute, 2013.
\newblock submitted, \href{http://arxiv.org/abs/1212.3868}{arxiv:1212.3868}.

\bibitem[Farina(2001)]{farina_2001}
L.~Farina.
\newblock {Evaluation of single layer potentials over curved surfaces}.
\newblock \emph{{SIAM} Journal on Scientific Computing}, 23\penalty0
  (1):\penalty0 81--91, 2001.
\newblock \doi{10.1137/S1064827599363393}.

\bibitem[Goodman et~al.(1990)Goodman, Hou, and Lowengrub]{goodman_1990}
J.~Goodman, T.~Y. Hou, and J.~Lowengrub.
\newblock {Convergence of the point vortex method for the 2-D Euler equations}.
\newblock \emph{Communications on Pure and Applied Mathematics}, 43:\penalty0
  415--430, 1990.
\newblock \doi{10.1002/cpa.3160430305}.

\bibitem[Graglia and Lombardi(2008)]{graglia_2008}
R.~D. Graglia and G.~Lombardi.
\newblock {Machine Precision Evaluation of Singular and Nearly Singular
  Potential Integrals by Use of Gauss Quadrature Formulas for Rational
  Functions}.
\newblock \emph{IEEE Transactions on Antennas and Propagation}, 56\penalty0
  (4):\penalty0 981--998, 2008.
\newblock \doi{10.1109/TAP.2008.919181}.

\bibitem[Greengard and Rokhlin(1987)]{greengard_fast_1987}
L.~Greengard and V.~Rokhlin.
\newblock A fast algorithm for particle simulations.
\newblock \emph{Journal of Computational Physics}, 73\penalty0 (2):\penalty0
  325--348, 1987.
\newblock \doi{10.1016/0021-9991(87)90140-9}.

\bibitem[Greengard et~al.(2013)Greengard, O'Neil, Barnett, and
  Klöckner]{qbx_acceleration_2012}
L.~Greengard, M.~O'Neil, A.~Barnett, and A.~Klöckner.
\newblock { Fast Algorithms for the Evaluation of Layer Potentials using
  `Quadrature by Expansion' }.
\newblock Technical report, Courant Institute, 2013.
\newblock in prep.

\bibitem[Hackbusch and Sauter(1994)]{hackbusch_sauter_1994}
W.~Hackbusch and S.~A. Sauter.
\newblock {On numerical cubatures of nearly singular surface integrals arising
  in BEM collocation}.
\newblock \emph{Computing}, 52\penalty0 (2):\penalty0 139--159, 1994.
\newblock \doi{10.1007/BF02238073}.

\bibitem[Haroldsen and Meiron(1990)]{haroldson_1998}
D.~J. Haroldsen and D.~I. Meiron.
\newblock {Numerical Calculation of Three-dimensional Interfacial Potential
  Flows using the Point Vortex Method}.
\newblock \emph{Communications on Pure and Applied Mathematics}, 43:\penalty0
  415--430, 1990.
\newblock \doi{10.1137/S1064827596302060}.

\bibitem[Helsing(2009)]{helsing_integral_2009}
J.~Helsing.
\newblock Integral equation methods for elliptic problems with boundary
  conditions of mixed type.
\newblock \emph{Journal of Computational Physics}, 228\penalty0 (23):\penalty0
  8892--8907, Dec. 2009.
\newblock ISSN 0021-9991.
\newblock \doi{10.1016/j.jcp.2009.09.004}.

\bibitem[Helsing(2013)]{helsing_tutorial_2012}
J.~Helsing.
\newblock {Solving integral equations on piecewise smooth boundaries using the
  RCIP method: a tutorial}.
\newblock Technical report, Lund University, 2013.
\newblock \href{http://arxiv.org/abs/1207.6737}{arxiv:1207.6737}.

\bibitem[Helsing and Ojala(2008{\natexlab{a}})]{helsing_2008a}
J.~Helsing and R.~Ojala.
\newblock {On the evaluation of layer potentials close to their sources}.
\newblock \emph{Journal of Computational Physics}, 227:\penalty0 2899--2921,
  2008{\natexlab{a}}.
\newblock \doi{10.1016/j.jcp.2007.11.024}.

\bibitem[Helsing and Ojala(2008{\natexlab{b}})]{helsing_corner_2008}
J.~Helsing and R.~Ojala.
\newblock Corner singularities for elliptic problems: Integral equations,
  graded meshes, quadrature, and compressed inverse preconditioning.
\newblock \emph{Journal of Computational Physics}, 227\penalty0 (20):\penalty0
  8820--8840, Oct. 2008{\natexlab{b}}.
\newblock \doi{10.1016/j.jcp.2008.06.022}.

\bibitem[Jarvenpää et~al.(2003)Jarvenpää, Taskinen, and
  Yla-Oijala]{jarvenpaa_2003}
S.~Jarvenpää, M.~Taskinen, and P.~Yla-Oijala.
\newblock {Singularity extraction technique for integral equation methods with
  higher order basis functions on plane triangles and tetrahedra}.
\newblock \emph{International Journal for Numerical Methods in Engineering},
  58:\penalty0 1149--1165, 2003.
\newblock \doi{10.1002/nme.810}.

\bibitem[Johnson and Scott(1989)]{johnson_1989}
C.~G.~L. Johnson and L.~R. Scott.
\newblock {An Analysis of Quadrature Errors in Second-Kind Boundary Integral
  Methods}.
\newblock \emph{{SIAM} Journal on Numerical Analysis}, 26\penalty0
  (6):\penalty0 1356--1382, 1989.
\newblock \doi{10.1137/0726079}.

\bibitem[Kapur and Rokhlin(1997)]{kapur_high-order_1997}
S.~Kapur and V.~Rokhlin.
\newblock {High-Order Corrected Trapezoidal Quadrature Rules for Singular
  Functions}.
\newblock \emph{SIAM Journal on Numerical Analysis}, 34\penalty0 (4):\penalty0
  1331--1356, 1997.
\newblock \doi{10.1137/S0036142995287847}.

\bibitem[Khayat and Wilton(2005)]{khayat_2005}
M.~A. Khayat and D.~R. Wilton.
\newblock {Numerical Evaluation of Singular and Near-Singular Potential
  Integrals}.
\newblock \emph{IEEE Transactions on Antennas and Propagation}, 53\penalty0
  (10):\penalty0 3180--3190, 2005.
\newblock \doi{10.1109/TAP.2005.856342}.

\bibitem[Kl{\"o}ckner et~al.(2011)Kl{\"o}ckner, Warburton, and
  Hesthaven]{kloeckner_viscous_2011}
A.~Kl{\"o}ckner, T.~Warburton, and J.~Hesthaven.
\newblock {Viscous Shock Capturing in a Time-Explicit Discontinuous Galerkin
  Method}.
\newblock \emph{Mathematical Modelling of Natural Phenomena}, 6\penalty0
  (03):\penalty0 57--83, 2011.
\newblock \doi{10.1051/mmnp/20116303}.

\bibitem[Klöckner(2013)]{kloeckner_general_2012}
A.~Klöckner.
\newblock {Quadrature by Taylor Expansion: A high-order method for general
  singular integrals}.
\newblock Technical report, Courant Institute, 2013.
\newblock in prep.

\bibitem[Kolm and Rokhlin(2001)]{kolm_numerical_2001}
P.~Kolm and V.~Rokhlin.
\newblock Numerical quadratures for singular and hypersingular integrals.
\newblock \emph{Computers and Mathematics with Applications}, 41\penalty0
  (3--4):\penalty0 327--352, 2001.
\newblock \doi{10.1016/S0898-1221(00)00277-7}.

\bibitem[Kolm et~al.(2003)Kolm, Jiang, and Rokhlin]{kolm_quadruple_2003}
P.~Kolm, S.~Jiang, and V.~Rokhlin.
\newblock { Quadruple and octuple layer potentials in two dimensions I:
  Analytical apparatus}.
\newblock \emph{Applied and Computational Harmonic Analysis}, 14\penalty0
  (1):\penalty0 47--74, Jan. 2003.
\newblock \doi{10.1016/S1063-5203(03)00004-6}.

\bibitem[Kress(1991)]{kress_boundary_1991}
R.~Kress.
\newblock Boundary integral equations in time-harmonic acoustic scattering.
\newblock \emph{Mathematical and Computer Modelling}, 15\penalty0
  (3–5):\penalty0 229--243, 1991.
\newblock \doi{10.1016/0895-7177(91)90068-I}.

\bibitem[Kress(1995)]{kress_numerical_1995}
R.~Kress.
\newblock {On the numerical solution of a hypersingular integral equation in
  scattering theory}.
\newblock \emph{Journal of Computational and Applied Mathematics}, 61\penalty0
  (3):\penalty0 345--360, 1995.
\newblock \doi{10.1016/0377-0427(94)00073-7}.

\bibitem[Kress(1999)]{kress_1999}
R.~Kress.
\newblock \emph{{Linear Integral Equations}}.
\newblock Applied Mathematical Sciences, vol. 82, Springer, 1999.

\bibitem[Leis(1965)]{leis_dirichletschen_1965}
R.~Leis.
\newblock {Zur Dirichletschen Randwertaufgabe des Außenraumes der
  Schwingungsgleichung}.
\newblock \emph{Mathematische Zeitschrift}, 90\penalty0 (3):\penalty0 205--211,
  1965.
\newblock \doi{10.1007/BF01119203}.

\bibitem[Lowengrub et~al.(1993)Lowengrub, Shelley, and
  Merriman]{lowengrub_1993}
J.~Lowengrub, M.~Shelley, and B.~Merriman.
\newblock {High-order and efficient methods for the vorticity formulation of
  the Euler equations}.
\newblock \emph{{SIAM} Journal on Scientific Computing}, 14:\penalty0
  1107--1142, 1993.
\newblock \doi{10.1137/0914067}.

\bibitem[Lyness and Delves(1967)]{lyness_numerical_1967}
J.~N. Lyness and L.~M. Delves.
\newblock On numerical contour integration round a closed contour.
\newblock \emph{Math. Comp.}, 21\penalty0 (100):\penalty0 561--577, Oct. 1967.
\newblock \doi{10.2307/2005000}.

\bibitem[Nédélec(2001)]{nedelec_acoustic_2001}
J.~Nédélec.
\newblock \emph{Acoustic and Electromagnetic Equations}.
\newblock Springer, 1 edition, Mar. 2001.
\newblock ISBN 0387951555.

\bibitem[Olver et~al.(2010)Olver, {(U.S.)}, and Technology]{olver_nist_2010}
F.~W.~J. Olver, N.~I. o.~S. {(U.S.)}, and Technology.
\newblock \emph{{NIST Handbook of Mathematical Functions}}.
\newblock Cambridge University Press, May 2010.
\newblock ISBN 9780521192255.
\newblock URL \url{http://dlmf.nist.gov}.

\bibitem[Pani{\v{c}}(1965)]{panic_solubility_1965}
O.~I. Pani{\v{c}}.
\newblock On the solubility of exterior boundary-value problems for the wave
  equation and for a system of {M}axwell's equations.
\newblock \emph{Uspehi Mat. Nauk}, 20\penalty0 (1 (121)):\penalty0 221--226,
  1965.

\bibitem[Saad and Schultz(1986)]{saad_gmres_1986}
Y.~Saad and M.~H. Schultz.
\newblock {GMRES: A Generalized Minimal Residual Algorithm for Solving
  Nonsymmetric Linear Systems}.
\newblock \emph{{SIAM} Journal on Scientific and Statistical Computing},
  7\penalty0 (3):\penalty0 856--869, July 1986.
\newblock \doi{10.1137/0907058}.

\bibitem[Schwab and Wendland(1992)]{schwab_1992}
C.~Schwab and W.~L. Wendland.
\newblock {On numerical cubatures of singular surface integrals in boundary
  element methods}.
\newblock \emph{Numerische Mathematik}, 62:\penalty0 342--369, 1992.
\newblock \doi{10.1007/BF01396234}.

\bibitem[Sidi and Israeli(1988)]{sidi_1988}
A.~Sidi and M.~Israeli.
\newblock {Quadrature methods for periodic singular Fredholm integral
  equations}.
\newblock \emph{Journal of Scientific Computing}, 3:\penalty0 201--231, 1988.
\newblock \doi{10.1007/BF01061258}.

\bibitem[Strain(1995)]{strain_1995}
J.~Strain.
\newblock {Locally-corrected multidimensional quadrature rules for singular
  functions}.
\newblock \emph{{SIAM} Journal on Scientific Computing}, 16\penalty0
  (4):\penalty0 992--1017, 1995.
\newblock \doi{10.1137/0916058}.

\bibitem[Yarvin and Rokhlin(1998)]{yarvin_generalized_1998}
N.~Yarvin and V.~Rokhlin.
\newblock {Generalized Gaussian Quadratures and Singular Value Decompositions
  of Integral Operators}.
\newblock \emph{SIAM Journal on Scientific Computing}, 20\penalty0
  (2):\penalty0 699--718, 1998.
\newblock \doi{10.1137/S1064827596310779}.

\bibitem[Ying et~al.(2006)Ying, Biros, and Zorin]{ying_2006}
L.~Ying, G.~Biros, and D.~Zorin.
\newblock {A high-order 3D boundary integral equation solver for elliptic PDEs
  in smooth domains}.
\newblock \emph{Journal of Computational Physics}, 219:\penalty0 247--275,
  2006.
\newblock \doi{10.1016/j.jcp.2006.03.021}.

\end{thebibliography}
\bibliographystyle{abbrvnat}
\end{document}